\DeclareMathOperator{\Mdim}{M.dim }
\DeclareMathOperator{\supp}{supp }
\newtheorem{lemma}{Lemma}[section]
\newtheorem{theorem}{Theorem}[section]
\newtheorem{corollary}{Corollary}[section]
\theoremstyle{remark}
\newtheorem*{remark}{Remark}
\numberwithin{equation}{section}
\DeclareMathOperator{\im}{Im}
\DeclareMathOperator{\diam}{diam}
\DeclareMathOperator{\aut}{Aut}
\DeclareMathOperator{\hyp}{hyp}
\newcommand{\dzy}{\,\frac{|dz|^2}{y}}
\newcommand{\Hbar}{\overline{\mathbb H}}
\DeclareMathOperator{\middy}{mid}
\title{Sparse Beltrami coefficients, integral means of conformal mappings and the Feynman-Kac formula}
\author{Oleg Ivrii}
\date{January 3, 2017}  
\begin{document}

\maketitle

\begin{abstract}
In this note, we  give an estimate for the  dimension of the image of the unit circle under a quasiconformal mapping whose dilatation has small support. We also prove an analogous estimate 
for the rate of growth of a solution of  a second-order parabolic equation given by the Feynman-Kac formula (with a sparsely supported potential)
and introduce a dictionary between the two settings. 
\end{abstract}

\section{Introduction}
 
  For a Beltrami coefficient $\mu$ with  $\|\mu\|_\infty < 1$, let $\tilde w^{\mu}$ denote the normalized solution of the Beltrami equation $\overline{\partial} w = \mu \, \partial w$ which fixes the points $0,1,\infty$. In the classical question on dimensions of quasicircles, one is interested in maximizing the Minkowski dimension of 
 $\tilde w^{\mu}(\mathbb{S}^1)$ over all Beltrami coefficients on the plane with $\|\mu\|_\infty \le k$ for a fixed $0 \le k < 1$. This question has been extensively studied by many authors. One notable result in this area is due to S.~Smirnov \cite{smirnov} who gave the upper bound \begin{equation}
D(k) \,=\, \sup_{\|\mu\|_\infty \le k} \Mdim \tilde w^{\mu}(\mathbb{S}^1) \,\le\, 1 + k^2, \qquad \text{for all }\ k \in [0,1),
\end{equation}
while recently, it was observed that $D(k) = 1 + k^2 \Sigma^2 + \mathcal O(k^{8/3 - \varepsilon})$ for some constant $0.87913 < \Sigma^2 < 1$, see the works \cite{AIPP, hedenmalm, qcdim}. 
%

In the present paper, we are interested in the case when the support of $\mu$ is contained in a garden 
\begin{equation}
\label{eq:the-garden}
\mathcal G = \bigcup_{j=1}^\infty B_j, \qquad d_{\mathbb{D}}(B_i, B_j) > R, \quad i \ne j,
\end{equation} made up of countably many horoballs $B_j \subset \mathbb{D}$, any two of which are at least a distance $R$ apart in the hyperbolic metric. Equivalently, we require   the horoballs $B^*_j = \{ z \in \mathbb{D} : d_{\mathbb{D}}(z, B_j) \le R/2 \}$ to be disjoint.

Let $\mu^+ = \overline{(1/\overline{z})^*\mu}$ denote the reflection of $\mu$ in the unit circle. Since the support of
 $\mu^+$ is contained in the exterior unit disk, $\tilde w^{\mu^+}: \mathbb{D} \to \mathbb{C}$ is conformal. 
Our first main theorem states:

\begin{theorem}
\label{sparse-thm}
Suppose $\mu$ is a Beltrami coefficient on the unit disk with $\| \mu \|_\infty \le 1$. If $\mu$ has sparse support, then 
\begin{equation}
\label{eq:dim-statement}
\Mdim \tilde w^{k \mu^+}(\mathbb{S}^1) \le 1 + Ce^{-R/2} k^2, \quad k < \min \Bigl (0.49, \frac{c}{2R} \Bigr ).
\end{equation}
\end{theorem}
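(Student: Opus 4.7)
The plan is to reduce the dimension estimate to a bound on the integral means of $f := \tilde w^{k\mu^+}|_\mathbb{D}$, and then control those integral means by a Feynman--Kac representation that takes advantage of the sparsity. The standard Makarov--Pommerenke machinery converts bounds on $M_p(r,f) = \int_0^{2\pi}|f'(re^{i\theta})|^p\,d\theta$ into bounds on $\Mdim f(\mathbb{S}^1)$; concretely, it suffices to show that $\beta_f(p_0) \le C e^{-R/2} k^2$ at a suitable $p_0$ slightly above $1$, since then the Smirnov-style optimization yields (\ref{eq:dim-statement}).

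First I would set up the parabolic picture on $\mathbb{D}$. Writing $v = \log|f'|$, the Beltrami equation together with a standard Bochner-type computation shows that $\Delta v$ is driven by $k^2|\mu^+|^2$. Consequently the hyperbolic spherical $L^p$-mean of $|f'|^p$ (as a function of the hyperbolic radius $\rho = \log\frac{1+r}{1-r}$) satisfies a reaction--diffusion equation whose potential is proportional to $p^2 k^2 |\mu^+|^2$, and the Feynman--Kac formula represents it as
$$M_p(r,f) \asymp \mathbb{E}_0\Bigl[\exp\Bigl(c\, p^2 k^2 \int_0^\rho |\mu^+(\omega_s)|^2\,ds\Bigr)\Bigr],$$
where $\omega_s$ is hyperbolic Brownian motion on $\mathbb{D}$ started at the origin.

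Next I would exploit the garden structure of $\supp\mu$. Because the enlargements $B_j^*$ are pairwise disjoint, hyperbolic Brownian motion enters each $B_j$ only $O(1)$ times in expectation during a given hyperbolic time window, and crossing the collar of hyperbolic width $R/2$ between $\partial B_j^*$ and $\partial B_j$ contributes a hyperbolic Poisson-kernel factor of order $e^{-R/2}$. Summing the independent contributions of the horoballs gives the first-moment estimate
$$\mathbb{E}_0\Bigl[\int_0^\rho |\mu^+(\omega_s)|^2\,ds\Bigr] \le C e^{-R/2}\,\rho,$$
which, after exponentiation, produces the desired bound on $\beta_f$ and hence on $\Mdim \tilde w^{k\mu^+}(\mathbb{S}^1)$.

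The main obstacle is passing from this first-moment estimate to the genuine exponential-moment bound that Feynman--Kac requires, without losing the $e^{-R/2}$ factor. This is precisely where the smallness assumption $k < c/(2R)$ enters: it forces $k^2 R \lesssim 1$, so that the higher-order terms in the Taylor expansion of the Feynman--Kac exponential --- corresponding to paths that visit several horoballs within a single hyperbolic-time window of size $\asymp R$ --- remain subdominant to the first-order sum. I expect this to be handled by applying the strong Markov property at the entry and exit times of the $B_j^*$, writing the exponential as a telescoping product of essentially independent excursion factors, and controlling each via a Khas'minskii-type gauge lemma.
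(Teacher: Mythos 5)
Your overall strategy---pass from $\Mdim$ to integral means, then interpret the integral means growth as a Feynman--Kac problem for hyperbolic Brownian motion with a potential localized near the garden---is indeed the route taken in the paper. But the central identification on which your argument rests is not correct, and this gap cannot be patched with the ingredients you list. You claim that the spherical mean of $|f'|^p$ satisfies a reaction--diffusion equation with potential proportional to $p^2k^2|\mu^+|^2$. Since $\log f'$ is analytic on $\mathbb{D}$ (where $f = \tilde w^{k\mu^+}$ is conformal), $\log|f'|$ is harmonic, and the correct identity is $\Delta_{\hyp}|f'(x)^p| = 2V(x)|f'(x)^p|$ with $V = \tfrac{1}{2}|p|^2\,|n_f/\rho|^2$, where $n_f = f''/f'$. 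This potential is \emph{not} a multiple of $|\mu^+|^2$ or of $\chi_{\mathcal G}$: $\mu^+$ is supported in $\mathbb{D}^c$, so on $\mathbb{D}$ the potential you write down vanishes identically, and even after reflecting to $\mathcal G$, the true potential $|n_f/\rho|^2$ is supported on \emph{all} of $\mathbb{D}$ and merely decays exponentially with hyperbolic distance from $\mathcal G$. The quantitative control of that decay is the actual work here: one needs a Dyn'kin-type estimate, sharpened by Astala's area distortion theorem, giving $|n_f/\rho|(z) \lesssim k\,L e^{-(1-k)L} + (1-|z|)^{1-k}$ when $d_{\mathbb{D}}(z, (\supp\mu)^+) > L$. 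This is precisely where the constraint $k < 0.49$ comes from (one needs $2-2k > 1$ to sum the tails), and your proposal does not mention it at all.

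There is a second missing step even granting a correct potential. Once you know $V \lesssim k^2|p|^2 e^{-R/2}$ outside $\bigcup B_j^*$, you still must control the contribution from inside each $B_j^*$, where $V$ can be as large as $\sim k^2|p|^2$. The paper handles this with a ``freezing lemma'': because $\|\log f'\|_{\mathcal B}\lesssim k$ and $k|p|\lesssim 1/R$, the factor $|f'(z)^p|$ is essentially constant across $B_j^*$, so $\int_{B^*} V|f'|^p g_t\,dA_{\hyp} \big/ \int_{B^*}|f'|^p g_t\,dA_{\hyp}$ reduces (via Lemma \ref{t-replacement} and \ref{s-replacement}) to the ratio of expected Brownian occupation times of $B$ and $B^*$, which is $\asymp e^{-R/2}$ by Lemma \ref{bm-hates-horoballs}. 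Your first-moment estimate $\mathbb{E}_0\int_0^\rho |\mu^+(\omega_s)|^2\,ds \lesssim e^{-R/2}\rho$ is the heuristic (\ref{eq:smw}) from the introduction, which the paper explicitly declines to use because the passage from first moment to exponential moment loses the $e^{-R/2}$ unless one argues excursion-by-excursion; the strong Markov / Khas'minskii step you sketch is plausible in spirit but, without the freezing lemma to decouple $|f'|^p$ from the potential inside each collar, it would not produce the stated $k^2 e^{-R/2}$ rate.
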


\subsection{Integral means spectra}

To prove Theorem \ref{sparse-thm}, we will analyze integral means of conformal mappings. For a conformal mapping $f: \mathbb{D} \to \mathbb{C}$, its {\em integral means spectrum} is given by
\begin{equation}
\label{eq:def-beta}
\beta_f(p) = \limsup_{r\to 1^-} \frac{\log \int_{|z|=r} |f'(z)^p| \, d\theta }{ \log \frac{1}{1-r}}, \qquad p \in \mathbb{C}.
\end{equation}
The connection between integral means and the Minkowski dimension of the boundary of the image domain comes from the relation
\begin{equation}
\label{eq:pomm}
\beta_{f}(p) = p-1 \Longleftrightarrow  p = \Mdim f(\mathbb{S}^1),
\end{equation}
valid when $f(\mathbb{S}^1)$ is a quasicircle, see for instance \cite[Corollary 10.18]{Pomm}. 
In view of the above identity, to prove Theorem \ref{sparse-thm}, 
it suffices to show:

\begin{theorem}
\label{sparse-thm2}
\begin{equation}
\beta_{\tilde w^{k\mu^+}}(p) \le Ce^{-R/2}k^2|p|^2/4, \qquad k < 0.49, \quad k|p| < c/R.
\end{equation}
\end{theorem}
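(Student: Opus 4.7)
\emph{Plan.} The strategy is to follow Smirnov's approach of estimating $\beta_f$ via the Bloch function
\[
B_f(z) \;:=\; (1-|z|^2)\,(\log f')'(z), \qquad f \;:=\; \tilde w^{k\mu^+}\big|_{\mathbb{D}},
\]
refined to exploit the sparse support of $\mu$. The universal bound $\beta_f(p)\le k^2|p|^2/4$ combines the pointwise estimate $|B_f|\le 2k$ with a Makarov/Green's-identity energy computation. The idea here is to replace the uniform bound by a pointwise estimate for $|B_f(z)|$ that decays with the hyperbolic distance from $z$ to $\supp \mu^+$ (the reflection of $\mathcal G$ across $\mathbb S^1$), then to feed this into the energy argument so that the sparsity $d_{\mathbb D}(B_i,B_j)>R$ produces the factor $e^{-R/2}$.

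\emph{Step 1 (pointwise Bloch decay).} Starting from the Cauchy-transform representation of $(\log f')'(z)$ in $k\mu^+$ and controlling the higher-order iterates by a contraction argument that uses $k<0.49$, I would aim to establish a pointwise estimate of the form
\[
|B_f(z)| \;\le\; C\,k \sum_j e^{-d_{\mathbb D}(z,B_j)}, \qquad z\in\mathbb D.
\]
The exponential factor translates the Euclidean $(w-z)^{-2}$ decay of the Cauchy kernel into a hyperbolic quantity, using the reflection that places $\supp \mu^+$ in the exterior of $\mathbb D$ together with the horoball geometry.

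\emph{Step 2 (Feynman--Kac).} I would then invoke the Feynman--Kac correspondence advertised in the title to rewrite the integral mean as an expectation over Brownian motion $B_t$ on $\mathbb D$ started at $0$ and stopped on exit from $\{|z|<r\}$; an It\^o / exponential-martingale computation yields
\[
\log \int_{|z|=r} |f'|^p\, d\theta \;\lesssim\; |p|^2\cdot \mathbb E_0\!\left[\int_0^{\tau} |B_f(B_t)|^2\, dt\right],
\]
valid in the regime where the Feynman--Kac exponential moment converges, which is precisely where $k|p|<c/R$.

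\emph{Step 3 (sparse integration) and main obstacle.} Substituting the Step 1 bound and expanding the square, cross-terms $e^{-d(z,B_i)}e^{-d(z,B_j)}$ for $i\ne j$ pick up a factor of at least $e^{-R/2}$ from the triangle inequality and the $R$-separation, while the diagonal reduces to the expected hyperbolic time that a radial Brownian path spends in a single horoball neighborhood, which the sparsity caps at $C e^{-R/2}\log\frac{1}{1-r}$. Combining, one recovers $\beta_f(p)\le Ce^{-R/2}k^2|p|^2/4$. The main obstacle is Step 1: one must track the Beltrami iteration sharply enough that the convolutions $\mu^+\!\ast\!\mu^+\!\ast\cdots$ do not spoil the per-horoball exponential rate, and here the hypothesis $k<0.49$ presumably enters as the radius of convergence of a Neumann series for the relevant Beurling-type operator restricted to a norm adapted to the horoball decomposition.
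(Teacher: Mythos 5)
Your overall framework — hyperbolic Brownian motion, a pointwise Dyn'kin-type decay for the non-linearity/Bloch function, and a Feynman--Kac reading of the integral means — is the same one the paper uses (Sections~\ref{sec:BIMS}, \ref{sec:FK}, Appendix~\ref{sec:dynkin}). Step~1 is essentially the Corollary to Lemma~\ref{dynkin-lemma2} (with decay rate roughly $e^{-(1-k)d}$ rather than $e^{-d}$, but this is a minor discrepancy). The problem is Step~2.

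The inequality you assert,
\[
\log \int_{|z|=r} |f'|^p\, d\theta \;\lesssim\; |p|^2\cdot \mathbb E_0\!\left[\int_0^{\tau} |B_f(B_t)|^2\, dt\right],
\]
is not justified, and in fact this is exactly where the main technical content of the proof lives. Writing $M_t = \re\log f'(B_t)$, one has $|f'(B_t)|^p = e^{pM_t}$ and $\langle M\rangle_t \asymp \int_0^t |B_f(B_s)|^2\,ds$. The exponential-martingale (Novikov/supermartingale) machinery gives you control of $\mathbb E\, e^{pM_t}$ in terms of the \emph{essential supremum} of $d\langle M\rangle_t/dt$ (recovering the unconditional $\beta_f(p)\le C k^2|p|^2$, with no gain from sparsity), not in terms of $\mathbb E\,\langle M\rangle_t$. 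Passing from $\log\mathbb E\, e^X$ to $\mathbb E[\,\text{something quadratic in }X\,]$ is a Jensen-type move in the wrong direction: $\log\mathbb E\,e^X \ge \mathbb E X$, and there is no universal bound going the other way. The quantity you actually need to control is the Feynman--Kac Lyapunov exponent $\limsup_t \frac1t\log\mathbb E_0\exp\bigl(c|p|^2\int_0^t |B_f(B_s)|^2\,ds\bigr)$, and the whole point is that this exponential moment can concentrate on rare paths that linger in the horoballs. If your Step~2 inequality were true, then Lemma~\ref{bm-hates-horoballs} (expected time in a horoball $\asymp e^{-R/2}$) would immediately finish the proof, and the paper's freezing lemma (Lemma~\ref{freezing-lemma}), the non-concentration estimate \eqref{eq:cm-quotient}, and the monotonicity lemmas (Lemmas~\ref{t-replacement}, \ref{s-replacement}) would all be unnecessary.

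What the paper actually does is work with the weighted mass $u_t(x) = |f'(x)^p|\, p_t(x)$, derives the differential inequality $\frac{d}{dt}\int u_t = \int V u_t$ with $V = \tfrac12|p|^2|n_f/\rho|^2$, and then proves that the $V$-weighted mass is at most $Ce^{-R/2}k^2|p|^2$ times the total mass. Establishing this requires knowing that $|f'|^p$ is essentially \emph{frozen} (constant up to bounded factors) across each thickened horoball $B^*$ — this is where the hypothesis $k|p| < c/R$ enters, not as a Neumann-series convergence radius — so that the comparison between $\int_B$ and $\int_{B^*}$ reduces to a Green's-function computation. Your Step~3 gestures toward the Green's-function estimate $\mathbb E_{z_0}\ell(B)\asymp e^{-R/2}$, but without the freezing lemma there is no way to transfer that unweighted estimate to the weighted exponential moment. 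So the missing idea is precisely the replacement of the linearized bound by a self-improving differential inequality combined with a freezing/non-concentration argument.
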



The advantage of estimating integral means comes from the fact that they allow us to view Theorem \ref{sparse-thm} as a {\em growth problem}\/. To make this feature more visible, consider the {\em Brownian spectrum of a conformal mapping}\/:
\begin{align}
\tilde \beta_f(p) &  := \limsup_{t \to \infty} \frac{1}{t} \log \mathbb{E}_0 |f'(B_t)|^p, \\
& = \limsup_{t \to \infty} \frac{1}{t} \log \int_{\mathbb{D}} |f'(z)|^p \cdot p_t(0, x) dA_{\hyp}(x).
\end{align}
In the equations above, $B_t$ is hyperbolic Brownian motion, that is,  Brownian motion in $\mathbb{D}$ equipped with the hyperbolic
metric $\rho = \frac{2 |dz|}{1-|z|^2}$, while 
the subscript ``0'' in $\mathbb{E}_0$ indicates that Brownian motion is to be started at the origin. Finally, $p_t(0,x)$ is the hyperbolic heat kernel which measures the probability density that a Brownian particle travels from $0$ to $x$ in time $t$.

In Section \ref{sec:BIMS}, we will show that $\tilde \beta_f(p) \ge \beta_f(p)$ for any $p \in \mathbb{C}$; this follows from the fact that the expected displacement of hyperbolic Brownian motion is linear in time.
Therefore, to prove Theorem \ref{sparse-thm2}, we may estimate the Brownian spectrum instead.

\begin{remark}
The use of hyperbolic Brownian motion is inspired by the work  \cite{lyons} of T.~Lyons who give an alternative perspective on Makarov's {\em law of iterated logarithm} for Bloch functions. As noted by I.~Kayumov in \cite{kayumov}, the study of the behaviour of the integral means spectrum at the origin is slightly more general, so our considerations may be viewed as an extension of Lyons' ideas.
\end{remark}

\subsection{Growth of solutions of PDEs}

Our proof of Theorem  \ref{sparse-thm2}  is inspired by an analogous statement from parabolic PDEs whose solution is given by the Feynman-Kac formula. Let $\Delta_{\hyp} = \rho(x)^{-2} \Delta$ denote the hyperbolic Laplacian and
$A_{\hyp} = \rho(x)^2 dA$ be the hyperbolic area element. For a  positive and bounded potential $V$, we consider the second order parabolic differential equation
\begin{equation}
\label{eq:heat-equation4}
\frac{\partial u}{\partial t} = \frac{1}{2} \cdot \Delta_{\hyp} u + V(x) u(x,t), \qquad (x,t) \in \mathbb{D} \times (0,\infty),
\end{equation}
where the initial condition $u_0(x) = u(x, 0)$ is a positive compactly supported function. If the potential $V \equiv 0$, then (\ref{eq:heat-equation4}) reduces to the heat equation.
There are many possible ways to measure the growth of solutions of PDEs, but for our purposes, the {\em Lyapunov exponent}
\begin{equation}
\label{eq:lyapunov}
\beta_V := \limsup_{t \to \infty} \frac{1}{t} \log \int_{\mathbb{D}} u_t(x) dA_{\hyp}(x)
\end{equation}
is the most natural.
 As is well known, e.g.~see \cite[Section 4.3]{durrett}, the solution to (\ref{eq:heat-equation}) is given by {\em Feynman-Kac formula}
\begin{equation}
\label{eq:FK}
u_t(x) = \mathbb{E}_x \biggl \{ u_0(B_t) \exp \int_0^t V(B_s) ds \biggr \}.
\end{equation}
Since the initial condition $u_0$ was positive, $u_t$ will remain positive for all time, and the total mass of the solution will be increasing in $t$.

Our second main theorem is an analogue of Theorem \ref{sparse-thm2} for sparse potentials:
\begin{theorem}
\label{sparse-analogue}
For a potential $V = \chi_\mathcal G = \bigcup B_j$ with sparse support,
$$
\beta_V(p) \, := \, \beta_{pV} \, \le \, Ce^{-R/2} p^2,
$$
for $p < p_0(R)$ sufficiently small.
\end{theorem}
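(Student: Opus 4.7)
First, by the reversibility of hyperbolic Brownian motion with respect to $dA_{\hyp}$ and the resulting symmetry of the Feynman--Kac kernel,
$$
\int_{\mathbb{D}} u_t \, dA_{\hyp} \;=\; \int_{\mathbb{D}} u_0(y) \, \mathbb{E}_y\!\bigl[e^{pS_t}\bigr]\,dA_{\hyp}(y), \qquad S_t = \int_0^t V(B_s)\,ds,
$$
so that $\beta_V(p)$ is the exponential growth rate of $\mathbb{E}_y[e^{pS_t}] = (e^{tL_p}1)(y)$ for $y$ ranging over a compact set containing $\supp u_0$, where $L_p = \tfrac{1}{2}\Delta_{\hyp} + pV$. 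It therefore suffices to exhibit a positive function $\phi \ge 1$ satisfying $L_p\phi \le \lambda\phi$ with $\lambda = Ce^{-R/2}p^2$; the comparison principle for the Feynman--Kac semigroup then yields $\mathbb{E}_y[e^{pS_t}] \le e^{\lambda t}\phi(y)$.

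The natural ansatz is $\phi = 1 + \sum_j \eta_j$, where $\eta_j$ is localized around the horoball $B_j$. On a single horoball, working in the upper half-plane model with $B_j = \{y > 1\}$, the function $y^{\alpha_-}$ with $\alpha_- = (1-\sqrt{1-8p})/2 = 2p + O(p^2)$ solves $L_p\phi = 0$ in $B_j$ and equals $1$ on $\partial B_j$; thus setting $\eta_j = y^{\alpha_-} - 1$ on $B_j$ produces the correct interior cancellation, in the sense that $L_p(1 + \eta_j) = 0$ pointwise inside $B_j$. Outside $B_j$, I would extend $\eta_j$ by a smoothed version of its boundary values, arranged so that $\eta_j$ decays exponentially with hyperbolic distance from $B_j$ at the Green's-function rate coming from the spectral bottom $\tfrac{1}{8}$ of $-\tfrac{1}{2}\Delta_{\hyp}$.

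The estimate on $L_p\phi$ then has two contributions. The ``diagonal'' contribution, where $pV$ on $B_j$ interacts with $\eta_j$ itself, vanishes by the choice of $\alpha_-$. The ``off-diagonal'' contribution, coming from $pV$ on $B_j$ acting on the tails of $\eta_{j'}$ for $j' \ne j$, is bounded using the sparsity assumption $d_{\mathbb{D}}(B_j, B_{j'}) > R$ by $p \cdot (\text{amplitude of }\eta_{j'} \text{ on } B_j) \cdot e^{-R/2}$. Since each $\eta_{j'}$ has amplitude $O(p)$, these off-diagonal terms sum to $O(p^2 e^{-R/2})$, which yields the required bound on $\lambda$.

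The main obstacle is the boundary-matching across each $\partial B_j$: a naive gluing produces a positive distributional contribution to $L_p\phi$ of order $\alpha_-/2 \approx p$ supported on the horocycles, which would force only the weaker bound $\lambda \sim p$. To improve this to $p^2$ one must mollify in a hyperbolic collar about $\partial B_j$ and show that the matching error cancels to leading order in $p$---essentially, that the interior and exterior normal derivatives of $\eta_j$ agree through first order in a Taylor expansion about $p = 0$. Executing this cancellation so that the residual is genuinely $O(p^2)$, while simultaneously controlling the superposition of all horoball tails via the sparsity and the Green's-function decay, is in my view the technical heart of the proof.
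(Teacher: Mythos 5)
Your supersolution strategy takes a genuinely different route from the paper, and the obstruction you flag at the end of your write-up is not a finishable technicality but a real gap. The paper's actual proof avoids pointwise supersolutions entirely: it works with the time-averaged solution $\hat u_t = \int_0^t u_s\,ds$, uses the mass identity $\frac{d}{dt}\int \hat u_t\,dA_{\hyp} = C(u_0) + \int p V\,\hat u_t\,dA_{\hyp}$ from (\ref{eq:heat-derivative}), and then proves the \emph{non-concentration estimate} of Lemma~\ref{eq:main-fk-lemma}, namely $\int_B \hat u_t\,dA_{\hyp} \big/ \int_{B^*}\hat u_t\,dA_{\hyp} \le C e^{-R/2}$ for each horoball $B$. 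That lemma is obtained probabilistically: one decomposes the Brownian paths contributing to $\int_{B^*}\hat u_t$ by their first-entry data $(x_0,t_0)$ into $B^*$, uses the exponential tail of the excursion time (Lemma~\ref{bm-hates-horoballs2}) together with the smallness of $p$ to \emph{freeze} the Feynman--Kac weight over each excursion, and is thereby reduced to a ratio of partial Green's functions, estimated by Lemma~\ref{bm-hates-horoballs}(ii) and Lemma~\ref{s-replacement}. The differential inequality then closes. Nothing in this argument asks a single function to satisfy $L_p\phi\le\lambda\phi$ pointwise.

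Your ansatz founders precisely where you suspect. The Neumann mismatch of $\phi=1+\sum_j\eta_j$ across each horocycle $\partial B_j$ is of size $\alpha_-\asymp p$; mollifying it over a collar of hyperbolic width $\epsilon$ turns the singular layer into a bulk term of order $p/\epsilon$, and no choice of $\epsilon$ or of exterior tail for $\eta_j$ lowers the resulting $\lambda$ to $O(p^2)$ --- the first-order terms in your proposed Taylor cancellation do not cancel, because the interior profile $y^{\alpha_-}$ already has nonzero slope $\alpha_-$ at $\partial B_j$ while any decaying harmonic exterior extension with the same boundary values is identically $0$. More fundamentally, a bound $\beta_{pV}\le C e^{-R/2}p^2$ cannot be produced by any method for a general sparse garden: by Jensen, $\log\mathbb{E}_y\bigl[e^{pS_t}\bigr]\ge p\,\mathbb{E}_y[S_t]$, so $\beta_{pV}\ge p\cdot\limsup_{t\to\infty}\frac1t\mathbb{E}_y[S_t]$, and the right-hand side is strictly positive (of order $e^{-R/2}$) whenever the horoballs have positive time-density --- as happens, for example, for the lift of a cusp collar of a finite-area hyperbolic surface. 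Consistently, the paper's differential inequality yields a bound \emph{linear} in $p$, namely $\beta_{pV}\le C e^{-R/2}\,p$; this is the analogue of the conformal statement once one recalls that the effective potential there is already $\sim k^2|p|^2$ (see (\ref{eq:vbound})). Any correct supersolution argument would therefore have to target $\lambda = C e^{-R/2} p$ rather than $p^2$, and even then a pointwise gluing at $\partial B_j$ costs $\lambda\sim p$ with no factor of $e^{-R/2}$; this is why the paper formulates the estimate as an integrated, path-based non-concentration bound instead.
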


\begin{remark}
The choice of notation is justified by the observation that the Lyapunov spectra $\beta_{pV}$ posses many of the same properties as integral means spectrum of conformal mappings, for instance, they are increasing and convex functions in $p$.
\end{remark}

\subsection{Alternative ideas and remarks}

The main difficulty in Theorem \ref{sparse-thm} is to make use of the separation condition between horoballs.
Perhaps the most obvious attempt is to take a Beltrami coefficient $\mu$ supported on the garden
and cook up a Beltrami coefficient $\nu \in M(\mathbb{D})$ with  $\| \nu \|_\infty < \| \mu \|_\infty$ and
$\tilde w^\mu = \tilde w^\nu$ on $\mathbb{S}^1$ and  then apply the general bounds on dimensions of quasicircles mentioned above.
However, it is easily seen that such an approach is impossible: there are sparse $k$-quasicircles which are genuine $k$-quasicircles.   In fact, this is true even if $\mathcal G \subset \mathbb{D}$ is composed of a single horoball -- we only need  $\mathcal G$ to contain round balls  of arbitrarily large hyperbolic diameter.
This easily follows from the fact that the Teichm\"uller norm can be described by the dual pairing $$\| \mu \|_T = \inf_{\nu \sim \mu} \| \mu \|_\infty = \sup_{\|q\| = 1} \int_{\mathbb{D}} \mu \cdot q,$$ where the supremum is taken over all
integrable quadratic differentials $q$ on the unit disk with $\|q\| = \int_{\mathbb{D}} |q| = 1$.

The reader may try to improve on the arguments of Smirnov \cite{smirnov} who used complex interpolation techniques to give an elegant proof of the bound $D(k) \le 1 + k^2$ suggested by Astala \cite{A}. However, in this set of ideas, it seems unlikely that one can make use of the sparsity assumption on the support. Another natural approach is to extend
the arguments of C.~Bishop \cite[Lemma 6.4]{bishop-bigdef} which involve a corona-type construction. While these  ideas do utilize the sparsity of the support, it is not really clear how to exploit the martingale nature of Bloch functions in this context, which is necessary to obtain quadratic growth.

In \cite[Section 9]{qcdim}, an analogue of Theorem \ref{sparse-thm} was proved using the techniques of Becker and Pommerenke for estimating integral means of univalent functions for gardens
 $\mathcal G$  that are unions of thickened geodesics (unit neighbourhoods of hyperbolic geodesics) with the same separation condition. The case of horoballs introduces non-uniformity and therefore requires new ideas. A priori, it was not clear to the author how to extend these arguments either,
although it can be done -- see Section \ref{sec:bp-argument}. This approach can be  generalized to gardens composed of objects other than horoballs.

Another perspective was offered by N.~Michalache via the notion of {\em mean wiggly sets} from \cite{GJM}. Here, one seeks to decompose $\mathbb{S}^1 = B \sqcup G$ so that $\tilde w^{k\mu^+}(G)$ satisfies a mean wiggliness condition while $B$ has dimension less than $1-\varepsilon$. H\"older properties of quasiconformal mappings guarantee that for small $k > 0$, $\Mdim \tilde w^{k\mu^+}(\mathbb{S}^1) = \Mdim \tilde w^{k\mu^+}(G)$, from which point the mean wiggly machinery can be applied. 
A possible definition of $G$ could be $G = \{e^{i\theta} : \beta(\theta) < Ce^{-R/2} \}$ where
\begin{equation}
\label{eq:mw}
\beta(\theta) = \limsup_{r \to 1} \frac{ \ell_{\hyp} \bigl ( [0, r e^{i\theta}] \cap \mathcal G \bigr )}{\ell_{\hyp} \bigl ( [0, r e^{i\theta}] \bigr )},
\end{equation}
with $\ell_{\hyp}$ being the hyperbolic length. 

The author's original proof of Theorem \ref{sparse-thm}, which will be presented in Section \ref{sec:BIMS}, was motivated by the observation that hyperbolic 
Brownian motion started at a point $z_0 \notin \mathcal G$ spends little time in the garden:
for almost every Brownian path $B_t$,
\begin{equation}
\label{eq:smw}
\limsup_{t \to \infty} \frac{\int_0^t \chi_{\mathcal G}(B_s) ds}{t} \le Ce^{-R/2}.
\end{equation}
This may be viewed as a  stochastic analogue of mean wiggliness. Since we will not actually  use   (\ref{eq:smw}) in this paper,
we will not give a proof.



\subsection{Dynamical considerations}

Suppose
$\Gamma$ is a cofinite area Fuchsian group
with at least one cusp. One may construct a Beltrami coefficient $\mu \in M(\mathbb{D})^\Gamma$ satisfying the hypotheses of the theorem by 
lifting a
Beltrami coefficient on $\mathbb{D}/\Gamma$ supported on a collar neighbourhood of one of the cusps.
For these special dynamical coefficients, one can give an alternative proof of Theorem \ref{sparse-thm} based on
 McMullen's identity \cite{mcmullen, wpgeo} which says that
\begin{equation}
\frac{d^2}{dt^2}\biggl |_{t=0} \Mdim \tilde w^{t\mu}(\mathbb{S}^1)  = \frac{4}{3} \cdot \lim_{r \to 1^-} \frac{1}{2\pi} \int_{|z|=r} \biggl | \frac{v'''_{\mu^+}}{\rho^2}(re^{i\theta}) \biggr |^2 \, d\theta,
\end{equation}
where
$$
v'''_{\mu^+}(z) = -\frac{6}{\pi} \int_{|z|>1} \frac{\mu(\zeta)}{(\zeta-z)^4} |d\zeta|^2.
$$
As explained in \cite{mcmullen}, since $v'''_{\mu^+}$ is naturally a quadratic differential, to measure its size, one should divide by the square of the Poincar\'e metric. 
According to \cite[Section 2]{wpgeo}, one has
\begin{align}
\frac{d^2}{dt^2} \biggl |_{t=0} \Mdim \tilde w^{t\mu}(\mathbb{S}^1) & \le C \cdot \lim_{r \to 1^-}  |\mathcal G \cap S_r|, \\
&
\label{eq:mcm-cesaro}
 \le C \cdot \limsup_{r \to 1^-} \frac{1}{|\log (1-r)|} \int_0^r |\mathcal G \cap S_s| \, \frac{ds}{1-s},
\end{align}
where $S_r = \{ z : |z| = r\}$.
It is not difficult to see that for a garden $\mathcal G$ which is the union of horoballs a hyperbolic distance $R$ apart, the quantity
 (\ref{eq:mcm-cesaro}) is $\le Ce^{-R/2}$, which is  essentially the estimate we want.
(To be honest, it is not clear how to use thermodynamic formalism to obtain a uniform estimate for $\Mdim \tilde w^{t\mu}(\mathbb{S}^1)$
 independent of $\mu$ and the underlying dynamical system.)

In any case, McMullen's identity fails for general Beltrami coefficients $\mu$, and the bound (\ref{eq:mcm-cesaro})
 is not enough since one can
concentrate the Beltrami coefficient to expand a small arc of the unit circle and still obtain maximal dimension distortion.

\subsection*{Acknowledgements}

The author wishes to thank Kari Astala, Ilia Binder, Istv\'an Prause and Eero Saksman for interesting conversations. 
The research was funded by the Academy of Finland, project nos. 271983 and 273458.

\section{Background in probability}

\label{sec:probability}

We now gather some basic facts and definitions that will be used throughout this paper. 
Let $\Delta_{\hyp} = \rho(x)^{-2} \Delta$ denote the hyperbolic Laplacian and
$A_{\hyp} = \rho(x)^2 dA$ be the hyperbolic area element.
Consider the {\em hyperbolic heat equation}
\begin{equation}
\label{eq:old-heat-equation}
\frac{\partial u}{\partial t} = \frac{1}{2} \cdot \Delta_{\hyp} u,  \qquad (x,t) \in \mathbb{D} \times (0,\infty).
\end{equation} 
The  {\em hyperbolic heat kernel}  $p_t(x,y)$ is characterized by the property that if $u_0(x)$ is a bounded continuous function on the disk then
\begin{equation}
\label{eq:reproducing-property}
u_t(x) = \int_{\mathbb{D}} p_t(x,y) u_0(y) dA_{\hyp}(y)
\end{equation}
is the unique bounded solution of (\ref{eq:old-heat-equation}) with $\lim_{t \to 0} u_t(x) = u_0(x)$, see for instance, \cite[Theorem 4.1.3]{durrett}. Since this reproducing  property determines $p_t(x,y)$ uniquely, the heat kernel is conformally invariant and symmetric, that is,
  $p_t(\phi(x), \phi(y)) = p_t(x, y) = p_t(y,x)$ for any $\phi \in \aut \mathbb{D}$.
  
As noted in the introduction, $p_t(x,y)$ measures the probability density for a Brownian particle to go from $x$ to $y$ in time $t$. More precisely, for any measurable set $E \subset \mathbb{D}$,
\begin{equation}
\label{eq:hbm}
\mathbb{P}_x(B_t \in E) = \int_E p_t(x,y) dA_{\hyp}(y),
\end{equation}
where the subscript ``$x$'' in $\mathbb{P}_x$ denotes the fact that Brownian motion is started at $x$.
In fact, (\ref{eq:hbm}) may be taken as the definition of hyperbolic Brownian motion.

 For $t > 0$, we define the {\em partial Green's function} as $g_t(x,y) := \int_0^t p_s(x,y) ds$.
Taking $t = \infty$ gives the usual {\em Green's function} $g_\infty(x,y)$. The Green's function measures the occupation density of Brownian motion, that is,  the integral $\int_E g_\infty(x,y) dA_{\hyp}(y)$ computes the expected amount of time Brownian motion spends in $E$.

When $y=0$, we will shorten notation to $p_t(x) := p_t(0,x)$ and $g_t(x) := g_t(0,x)$. 
We will make use of the explicit formula
\begin{equation}
\label{eq:ginf}
g_\infty(x) = \frac{1}{\pi} \log \frac{1}{|x|}.
\end{equation}
According to \cite[Section 4.8]{durrett}, $g_\infty(x)dA(x) = \frac{1}{\pi} \log \frac{1}{|x|} dA(x)$ is the occupation measure for the Euclidean Brownian motion in $\mathbb{C}$ started at the origin and simulated until it hits $\partial \mathbb{D}$. This coincidence is explained by the fact that hyperbolic Brownian motion is a time change of the Euclidean Brownian motion by $\rho(x)^{2}$, that is if one wants to simulate hyperbolic Brownian motion up to time $t$, one can instead simulate Euclidean Brownian motion $W_2$ up to time $\tau$ defined by $t = \int_0^\tau \rho^{2} (W_2(s)) ds$.

The existence of the Green's function implies that hyperbolic Brownian motion is transient: any path tends to the unit circle.
In fact, the hyperbolic distance from the starting point to $B_t$ grows linearly with time, i.e.~for any $\varepsilon > 0$, 
\begin{equation}
\label{eq:linear-displacement}
\mathbb{P}_0 \Bigl ( (1-\varepsilon) t \,<\, d_\mathbb{D}(0,B_t) \,<\, (1 + \varepsilon)t \Bigr ) \,  \to \, 0, \qquad \text{as } t \to \infty,
\end{equation}
see for instance \cite[Theorem 3.1]{DM}.

\subsection{Brownian motion escapes from horoballs}

We now compute the expected time that Brownian motion spends in a horoball:
\begin{lemma}
\label{bm-hates-horoballs}
Let $B$ be a horoball in the unit disk and consider hyperbolic Brownian motion started at a point $z_0 \in \mathbb{D}$. Let
$\ell(B) =  \int_0^\infty   \chi_B(B_s) ds$ denote the amount of time Brownian motion spends in $B$. Then,

{\em (i)} If $z_0 \in \partial B$, $\mathbb{E}_{z_0}\bigl ( \ell(B) \bigr ) \asymp 1$.

{\em (ii)} If $z_0 \, \in \, \partial B^* = \{ z \in \mathbb{D} : d_{\mathbb{D}}(z, B) \le R/2 \},$ then  $\mathbb{E}_{z_0}\bigl ( \ell(B) \bigr ) \asymp e^{-R/2}$.
\end{lemma}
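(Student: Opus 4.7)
The plan is to express $\mathbb{E}_{z_0}(\ell(B))$ as an integral of the Green's function against hyperbolic area and then evaluate this integral after using a hyperbolic isometry to normalize the configuration. By Fubini applied to the definition $g_\infty(x,y) = \int_0^\infty p_s(x,y)\,ds$, we have
$$\mathbb{E}_{z_0}(\ell(B)) \, = \, \int_0^\infty \mathbb{P}_{z_0}(B_s \in B)\,ds \, = \, \int_B g_\infty(z_0, y)\, dA_{\hyp}(y).$$
Since hyperbolic Brownian motion and its Green's function are conformally invariant, we may pass to the upper half plane $\mathbb{H}$ and choose an isometry sending $z_0 \mapsto i$ and the ideal tangent point of $B$ to $\infty$, so that $B$ becomes a horoball of the form $\{w \in \mathbb{H} : \im w \ge h\}$. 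In case (i) the assumption $z_0 \in \partial B$ gives $h = 1$, while in case (ii) the condition $d_{\mathbb{D}}(z_0, B) = R/2$ gives $h = e^{R/2}$, since the hyperbolic distance between the horocycles $\{\im w = 1\}$ and $\{\im w = h\}$ equals $\log h$.

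Transporting $g_\infty(0,z) = \frac{1}{\pi}\log(1/|z|)$ through the Cayley transform produces
$$g_\infty(i, x+iy) \, = \, \frac{1}{2\pi} \log \frac{x^2 + (y+1)^2}{x^2 + (y-1)^2},$$
so the occupation integral becomes
$$\mathbb{E}_i(\ell(B)) \, = \, \int_h^\infty \left( \int_{-\infty}^\infty g_\infty(i, x+iy)\,dx \right) \frac{dy}{y^2}.$$
The inner integral can be evaluated by the elementary identity
$$\int_{-\infty}^\infty \log \frac{x^2 + a^2}{x^2 + b^2}\, dx \, = \, 2\pi(a - b), \qquad a \ge b \ge 0,$$
which follows from differentiating under the integral in $a$ (using $\int_{-\infty}^\infty \frac{2a\,dx}{x^2 + a^2} = 2\pi$). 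Applied with $a = y+1$, $b = y-1$, it shows that the inner integral equals $2$ for every $y \ge 1$, independent of $y$. Consequently
$$\mathbb{E}_i(\ell(B)) \, = \, 2 \int_h^\infty \frac{dy}{y^2} \, = \, \frac{2}{h},$$
which is $2$ in case (i) and $2e^{-R/2}$ in case (ii), giving simultaneously the upper and lower bounds encoded in $\asymp$.

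The argument is essentially a direct computation and I do not foresee a serious obstacle. The only delicate point is the inner integral identity at $y = 1$ (case i), where the integrand develops an integrable logarithmic singularity at $x = 0$; this is handled by a regularization $\int_{-R}^R$ and a limit, or by continuity of the formula as $b \to 0^+$. If one prefers to avoid the exact evaluation, a robust alternative for case (ii) uses the decay $g_\infty(z_0, y) \lesssim e^{-d_{\mathbb{D}}(z_0, y)}$ together with the fact that the hyperbolic volume of a horocyclic shell of unit width inside $B$ is uniformly bounded, which still yields the correct scaling $\asymp e^{-R/2}$ after summing over shells.
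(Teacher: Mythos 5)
Your proof is correct and follows essentially the same route as the paper: express $\mathbb{E}_{z_0}(\ell(B))$ as $\int_B g_\infty(z_0,\cdot)\,dA_{\hyp}$, normalize by a hyperbolic isometry, and evaluate. The only (pleasant) difference is that you move to the upper half-plane where the Green's integral can be computed in closed form, giving the exact identity $\mathbb{E}_{z_0}(\ell(B)) = 2e^{-d_{\mathbb{D}}(z_0,B)}$, whereas the paper stays in the disk and merely estimates the integral up to constants.
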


\begin{proof}
(i) We compute the expected amount of time that Brownian motion spends in $B$ by integrating the Green's function $g_\infty$:
$$
\mathbb{E}_{z_0} (\ell(B)) = \int_B g_\infty(z_0,z) dA_{\hyp}(z).
$$
By the conformal invariance of Brownian motion, this integral is independent of the choice of horoball $B$ and the point $z_0 \in \partial B$.
For a horoball $B$ passing through $z_0 = 0$, this is
$$
\mathbb{E}_{0}(\ell(B))  = \frac{1}{\pi} \int_B \log \frac{1}{|z|} dA_{\hyp}(z) < \infty.
$$
(ii) Similarly, conformal invariance allows us to consider the case when the initial point $z_0 = 0$ and $B \subset \mathbb{D}$ is a horoball that rests on 1. The assumption on the hyperbolic distance from $z_0$ to $B$ implies that the Euclidean diameter of $B$ is $\asymp e^{-R/2}$. Integrating, we obtain
$$
\mathbb{E}_{0}(\ell(B))  = \frac{1}{\pi} \int_B \log \frac{1}{|z|} dA_{\hyp}(z) \asymp e^{-R/2}
$$
as desired.
\end{proof}

For a horoball $B$, define the {\em excursion time} of Brownian motion as the length of time between the first entry and last exit. We now show that if Brownian motion enters a horoball, the probability of a long excursion is exponentially small:

\begin{lemma}
\label{bm-hates-horoballs2}
Let $B$ be a horoball in the unit disk and consider hyperbolic Brownian motion started at a point $z_0 \in \partial B$. Let $\tau_B = \sup_{t \ge 0}(B_t \in B)$. Then,
 $\mathbb{P}_{z_0}\bigl ( \tau_B > t \bigr ) < Ce^{-\gamma t}$ for some $\gamma > 0.$
\end{lemma}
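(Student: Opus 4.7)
The plan is to reduce the statement to a one-dimensional calculation about Brownian motion with drift. By the conformal invariance of hyperbolic Brownian motion, I would transport the problem to the upper half-plane model and assume that $B = \{\im z \geq 1\}$ is the horoball based at $\infty$, with $z_0 \in \partial B$, so that $\tau_B = \sup\{t \geq 0 : \im B_t \geq 1\}$.

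Next, I would show that the one-dimensional process $Z_t := \log \im B_t$ is a standard Brownian motion with drift $-1/2$, starting at $Z_0 = 0$. The quickest way is to use that hyperbolic Brownian motion is a time-change of Euclidean Brownian motion by $\rho^2 = 1/y^2$, so in local coordinates $(X_t, Y_t)$ one has $dY_t = Y_t \, dW^2_t$ for a standard Euclidean Brownian motion $W^2$. Itô's formula then gives $dZ_t = dW^2_t - \tfrac{1}{2}\, dt$. (An equivalent check: $\tfrac{1}{2}\Delta_{\hyp} \log y = -\tfrac{1}{2}$, which identifies the drift.)

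With this reduction, $\tau_B$ is the last visit of $Z_t$ to $[0,\infty)$. Applying the strong Markov property at time $t$, together with the classical formula $\mathbb{P}(\sup_{s \geq 0} \tilde Z_s \geq y) = e^{-y}$ for $y \geq 0$ (where $\tilde Z$ is a drift-$(-1/2)$ Brownian motion from $0$), I obtain
\begin{equation*}
\mathbb{P}_{z_0}(\tau_B > t) \,=\, \mathbb{P}\bigl(\sup_{s \geq t} Z_s \geq 0\bigr) \,=\, \mathbb{E}\bigl[\min(1, e^{Z_t})\bigr].
\end{equation*}
Since $Z_t \sim N(-t/2,\, t)$, both $\mathbb{P}(Z_t \geq 0)$ and $\mathbb{E}[e^{Z_t}\chi_{Z_t < 0}]$ reduce to explicit Gaussian integrals; a completion of the square shows each is bounded by $e^{-t/8}$, yielding the desired bound with, say, $\gamma = 1/8$.

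The main obstacle is essentially bookkeeping: pinning down the correct sign of the drift in the SDE for $\log \im B_t$, and invoking the correct formula for the running maximum of a drifted Brownian motion. Both steps are standard, so once the coordinate is chosen the proof is quite short; there is no genuine analytic difficulty beyond what is already contained in the Itô formula and the exponential law of the maximum.
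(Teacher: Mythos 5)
Your proposal is correct, and the reduction (switch to the upper half-plane, identify $\log \im B_t$ as a one-dimensional drifted Brownian motion, describe $\tau_B$ as the last exit time of a drifted Brownian motion from $[0,\infty)$) is the same one the paper uses. Where you diverge is the final step: the paper simply cites the fact that the last zero of a drifted Brownian motion has a $\Gamma(1/2,1/8)$ density and reads off the exponential tail; you instead condition at time $t$ via the Markov property, invoke the exponential law $\mathbb{P}(\sup_{s\ge 0}\tilde Z_s\ge a)=e^{-a}$ for the running maximum of a drift-$(-1/2)$ Brownian motion, and reduce the tail of $\tau_B$ to two explicit Gaussian integrals. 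Your version is slightly more self-contained, at the cost of a short completion-of-the-square computation; the paper's version is shorter but leans on a named distribution. One small discrepancy worth noting: the paper records the formula $\log(\im Z_t)=2W_t-t$, whereas the SDE derivation you give (for the generator $\tfrac12\Delta_{\hyp}$ consistent with the metric $\rho_{\mathbb H}=1/y$ used throughout the paper) yields $d(\log Y_t)=dW_t-\tfrac12\,dt$, i.e.\ $W_t-t/2$. Since $2W_t-t=2(W_t-t/2)$, the two processes have the same zero set, so $\tau_B$ and hence the exponent $1/8$ are unaffected; but your normalization is the one that actually matches the paper's stated conventions, and it is good that you verified the drift rather than quoting it.
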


\begin{proof}
We switch over to the upper half-plane. In view of the conformal invariance, we may take the initial point $z_0 = i$ and the horoball $$B = \{z \in \mathbb{H} : \im z > 1\}.$$
In the upper half-plane, the law of the imaginary part of hyperbolic Brownian motion $Z_t$ started at $i \in \mathbb{H}$ has a simple form: namely, $\log(\im Z_t) = 2W_t - t$ where $W_t$ is the usual Euclidean Brownian motion on the real line, e.g.~see \cite{lyons}. 
In this context, $2W_t -t \to -\infty$  almost surely and $\tau_B$  is the last time at which $2 W_t - t = 0$. As is well known to probabilists, the density of $\tau_B$ can be expressed in terms of the {\em gamma distribution}
$$\Gamma \biggl (\frac{1}{2}, \frac{1}{8} \biggr) = \frac{1}{\sqrt{8 \pi s}} \cdot e^{-s/8} ds, \qquad s \in (0, \infty),$$
so any exponent $\gamma < 1/8$ works.
\end{proof}

\subsection{Two lemmas on monotonicity}

For a horoball $B$ in the disk  which does not contain the origin, we denote its top point (the one closest to the origin) by  $z_B$ and its Euclidean center by  $z_B^{\middy}$.
The following lemma will be useful in the sequel:

 \begin{lemma}
 \label{t-replacement}
We have:
 
{\em (i)} For a fixed $t > 0$, the quotient $g_t(r)/g_\infty(r)$ is decreasing in $r \in [0,1)$.

{\em (ii)} For a horoball $B$ in the unit disk contained in $\{z \in \mathbb{D}: 1/2 < |z| < 1\}$,
$$ g_t(z_{B}^{\middy}) \, \lesssim \,  \int_{B}  g_t(x) dA_{\hyp}(x) \, \lesssim \, g_t(z_{B}).$$
\end{lemma}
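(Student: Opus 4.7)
The plan is to prove part (i) first, and use it together with Lemma~\ref{bm-hates-horoballs} to reduce part (ii) to infinite-time computations. Since hyperbolic Brownian motion is conformally invariant, both $g_t(z)$ and $g_\infty(z)$ depend on $z$ only through the hyperbolic distance $d := d_{\mathbb{D}}(0,z)$, so (i) is the statement that $g_t(d)/g_\infty(d)$ is decreasing in $d$.

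The crux of (i) is a \emph{monotone likelihood ratio} property of the hyperbolic heat kernel: for $0 \le d_2 < d_1$, the ratio $p_s(d_1)/p_s(d_2)$ is increasing in $s$ (morally, a more distant point is visited later). One can verify this from the explicit hyperbolic heat kernel formula or, equivalently, from the radial heat equation $\partial_s p_s = \tfrac{1}{2}(p_s'' + \coth d \cdot p_s')$, by checking that $\partial_s \log p_s(d)$ is increasing in $d$. Once MLR is in hand, the measure $p_s(d)\,ds/g_\infty(d)$ is a probability density on $[0,\infty)$ for each $d$; MLR gives stochastic domination of the $d_2$-density by the $d_1$-density, and hence $g_t(d)/g_\infty(d) = \int_0^t p_s(d)\,ds/g_\infty(d)$ is decreasing in $d$. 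The main obstacle is this MLR verification, which requires some technical work with the explicit heat kernel; as an alternative, one could differentiate $g_t/g_\infty$ in $d$ directly and reduce to a comparison between $p_t$ and $g_\infty'/g_\infty$.

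For the upper bound in (ii), the point $z_B$ is the element of $\overline{B}$ closest to $0$, so every $x \in B$ satisfies $|x| \ge |z_B|$, and by part (i),
\[
g_t(x) \;\le\; \frac{g_t(z_B)}{g_\infty(z_B)} \cdot g_\infty(x).
\]
Integrating over $B$ reduces the upper bound to showing $\int_B g_\infty \, dA_\hyp \lesssim g_\infty(z_B)$. But $\int_B g_\infty \, dA_\hyp$ is the expected total time hyperbolic Brownian motion from $0$ spends in $B$, and by the strong Markov property at $\tau_B$ combined with Lemma~\ref{bm-hates-horoballs}(i), this equals $\mathbb{P}_0(\tau_B < \infty) \cdot O(1)$. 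Under $B \subset \{|z| > 1/2\}$, both the hitting probability $\mathbb{P}_0(\tau_B < \infty)$ (which is the harmonic measure of $B$ from $0$) and $g_\infty(z_B) = \tfrac{1}{\pi}\log(1/|z_B|)$ are comparable to the Euclidean diameter of $B$, completing this estimate.

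For the lower bound in (ii), I choose a hyperbolic ball $B' = B_\hyp(z_B^{\middy}, \delta)$ of fixed small radius $\delta$. The hypothesis $B \subset \{1/2 < |z| < 1\}$ forces the Euclidean radius of $B$ to be at most $1/4$; passing to the upper half-plane where $B$ becomes $\{\operatorname{Im} w > H\}$ and $z_B^{\middy}$ lifts to imaginary height $\sim 2H$, one sees that $z_B^{\middy}$ is at hyperbolic distance $\asymp 1$ from $\partial B$, so $B' \subset B$ once $\delta$ is small enough. Since $g_t$ is radial and $B'$ has hyperbolic diameter $2\delta$, standard smoothness of the heat kernel on balls of fixed hyperbolic radius yields $g_t \ge c\, g_t(z_B^{\middy})$ on $B'$; combined with $A_\hyp(B') \asymp 1$, this gives $\int_B g_t \, dA_\hyp \ge \int_{B'} g_t \, dA_\hyp \gtrsim g_t(z_B^{\middy})$, as required.
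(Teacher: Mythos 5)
For part (i), your reduction to a monotone likelihood ratio property of the hyperbolic heat kernel is a genuinely different route from the paper's, and it would work \emph{if} the MLR property were established. But you assert rather than verify that $p_s(d_1)/p_s(d_2)$ is increasing in $s$ for $d_1 > d_2$ (equivalently $\partial_d\partial_s \log p_s(d) \ge 0$), and this is not an easy consequence of the explicit hyperbolic heat kernel formula; it is precisely the hard part of your argument, and flagging it as ``some technical work'' does not close the gap. Note also that MLR is strictly stronger than the stated conclusion (it gives pointwise-in-$s$ monotonicity, whereas (i) is only a CDF/stochastic-domination statement). The paper's argument avoids any heat-kernel estimate: it compares the normalized Green's potentials $G_i(x)/E_i$ of two thin concentric annuli by a direct case analysis, then uses the Chapman--Kolmogorov identity $g_\infty(0,y) - g_t(0,y) = \int p_t(0,x)\,g_\infty(x,y)\,dA_{\hyp}(x)$ together with the pointwise inequality to transfer the comparison to $g_\infty - g_t$, and finally lets the annuli shrink. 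Only the $\infty$-Green's function and the Markov property are used.

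For part (ii), the upper bound is correct and parallels the paper (reduce $g_t$ to $g_\infty$ via (i), then show $\int_B g_\infty\,dA_{\hyp} \lesssim g_\infty(z_B)$); your strong-Markov computation of the latter using Lemma~\ref{bm-hates-horoballs}(i) is a legitimate alternative to the paper's terse ``simple computation.'' However, the lower bound contains a genuine error: the claim that ``standard smoothness of the heat kernel'' gives $g_t \ge c\,g_t(z_B^{\middy})$ on all of $B'$ with $c$ uniform in $t$ is false. Away from the origin, $\Delta_{\hyp} g_t = 2p_t > 0$, so $g_t$ is subharmonic, not harmonic, and elliptic Harnack does not apply; parabolic Harnack would require a time shift you do not have. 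Concretely, for $x \in B'$ with $d_{\mathbb{D}}(0,x) > d_{\mathbb{D}}(0,z_B^{\middy})$ the ratio $g_t(x)/g_t(z_B^{\middy}) \to 0$ as $t \to 0^+$ by small-time Gaussian asymptotics of $p_t$. The fix is simple and requires no regularity at all: $p_s$, hence $g_t$, is radially decreasing, so on the portion of $B$ (or of $B'$) with $|x| \le |z_B^{\middy}|$, which still has hyperbolic area $\asymp 1$, one has $g_t(x) \ge g_t(z_B^{\middy})$ trivially and the lower bound follows. This is what the paper means by ``estimate the integral over the top half of $B$.''
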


\begin{proof}
(i)
 Let us show that $g_t(r_1)/g_\infty(r_1) < g_t(r_2)/g_\infty(r_2)$ with $r_1 < r_2$. Consider two very  thin disjoint annuli $A_1 = A(r_1, r_1')$ and $A_2 = A(r_2, r_2')$.
 From the probabilistic interpretation of the Green's function, it is clear that the function
$$
G_1(x) = \int_{A_1} g_\infty(x, y) dA_{\hyp}(y)
$$
only depends on $|x|$, is constant in $B(0, r_1)$ and is decreasing for $|x| \ge r_1$.
We denote the value of $G_1$ on $B(0, r_1)$ by $E_1$. We define $G_2$ and $E_2$ similarly using the annulus $A_2$ in place of $A_1$.

We claim that for any $x \in \mathbb{D}$,
\begin{equation}
\label{eq:t-replacement}
\frac{1}{E_2} \cdot  \int_{A_2} g_{\infty}(x, y) dA_{\hyp}(y) \ge \frac{1}{E_1} \cdot  \int_{A_1} g_{\infty}(x, y) dA_{\hyp}(y).
\end{equation} 
  There are three possibilities: either 
  $x \in B(0,r_1)$, $A(r_1, r_2)$ or $A(r_2, 1)$. 
  We examine the three cases separately.  
    In the first case, 
    $$
    \int_{A_1} g_{\infty}(x, y) dA_{\hyp}(y) = E_1, \qquad  \int_{A_2} g_{\infty}(x, y) dA_{\hyp}(y) = E_2,
    $$
     so (\ref{eq:t-replacement}) is an equality. In the second case,
         $$
    \int_{A_1} g_{\infty}(x, y) dA_{\hyp}(y) < E_1, \qquad  \int_{A_2} g_{\infty}(x, y) dA_{\hyp}(y) = E_2,
    $$
Finally, in the third case when $x \in A(r_2, 1)$, if a path of Brownian motion started at $x$ hits $A_1$, it must first cross the circle $S_{r_2}$, so the third case can only be worse than the second case. This proves (\ref{eq:t-replacement}).

Let $g_{t, \infty} = g_\infty - g_t$.  
From (\ref{eq:t-replacement}) and the Markov property of Brownian motion, it is clear that
\begin{equation}
\label{eq:t-replacement}
\frac{1}{E_2} \cdot  \int_{A_2} g_{t,\infty}(0, y) dA_{\hyp}(y) \ge \frac{1}{E_1} \cdot  \int_{A_1} g_{t,\infty}(0, y) dA_{\hyp}(y).
\end{equation} 
Statement (i) follows by tending the thickness of the annuli $A_1$ and $A_2$ to zero.

(ii) For $t=\infty$, this is a simple computation based on the explicit expression for $g_\infty$. 
For $t < \infty$, we use (i). The upper bound is immediate, while for the lower bound, we only need to estimate the integral over the top half of $B$.
\end{proof}

We will also need:

\begin{lemma}
 \label{s-replacement}
Suppose $B_1 \subset B_2$ are two horoballs which rest on the same point of the unit circle and $z_0 \in \partial B_2$. Then, for any $t > 0$,
$$
\frac{\int_{B_1} g_t(z_0, x) dA_{\hyp}(x)}{\int_{B_1} g_\infty(z_0, x) dA_{\hyp}(x)}
< \frac{\int_{B_2} g_t(z_0, x) dA_{\hyp}(x)}{\int_{B_2} g_\infty(z_0, x) dA_{\hyp}(x)}.
$$
\end{lemma}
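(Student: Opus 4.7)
The plan is to express both integrals in terms of a single universal time function via the strong Markov property, and then deduce the strict inequality from the fact that Brownian motion cannot enter $B_1$ instantaneously from $z_0 \in \partial B_2$. Since the group of hyperbolic isometries acts transitively on pairs (horoball, boundary point), the conformal invariance of the heat kernel recorded in Section~\ref{sec:probability} implies that
\begin{equation*}
G(u) \,:=\, \mathbb{E}_w\!\left[\int_0^u \chi_B(B_s)\,ds\right], \qquad w \in \partial B,
\end{equation*}
is independent of the horoball $B$ and of the boundary point $w \in \partial B$. Moreover $G$ is smooth and strictly increasing in $u$, with $G(\infty) \asymp 1$ by Lemma \ref{bm-hates-horoballs}(i).

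Applying this to $B_2$, since $z_0 \in \partial B_2$ we have $\int_{B_2} g_t(z_0,x)\,dA_{\hyp}(x) = G(t)$ and $\int_{B_2} g_\infty(z_0,x)\,dA_{\hyp}(x) = G(\infty)$ directly from the definitions. For $B_1$, set $\tau := \inf\{s \ge 0 : B_s \in B_1\}$. Since $B_1 \subsetneq B_2$ and both rest on the same point of $\mathbb{S}^1$, the closure $\overline{B_1}$ lies at a positive hyperbolic distance from $z_0 \in \partial B_2$, so $\tau > 0$ almost surely. The strong Markov property at $\tau$, combined with $B_\tau \in \partial B_1$ and the universality of $G$, gives
\begin{equation*}
\int_{B_1} g_t(z_0,x)\,dA_{\hyp}(x) \,=\, \mathbb{E}_{z_0}\bigl[\mathbf{1}_{\tau<t}\,G(t-\tau)\bigr],
\end{equation*}
and letting $t \to \infty$,
\begin{equation*}
\int_{B_1} g_\infty(z_0,x)\,dA_{\hyp}(x) \,=\, \mathbb{P}_{z_0}(\tau<\infty)\cdot G(\infty).
\end{equation*}

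After cross-multiplying, the claimed inequality reduces to $\mathbb{E}_{z_0}[\mathbf{1}_{\tau<t}\,G(t-\tau)] < G(t)\,\mathbb{P}_{z_0}(\tau<\infty)$. Since $G$ is strictly increasing and $\tau > 0$ a.s., on the event $\{\tau < t\}$ we have $G(t-\tau) < G(t)$, which yields
\begin{equation*}
\mathbb{E}_{z_0}\bigl[\mathbf{1}_{\tau<t}\,G(t-\tau)\bigr] \,<\, G(t)\,\mathbb{P}_{z_0}(\tau<t) \,\le\, G(t)\,\mathbb{P}_{z_0}(\tau<\infty),
\end{equation*}
as required. The main insight is the choice of decomposition: the integral over the smaller horoball is a convolution of the hitting-time distribution of $\partial B_1$ against the very same universal profile $G$ that governs the integral over the larger horoball, so the entire argument rests on the strict positivity of $\tau$—i.e., the fact that Brownian motion cannot jump—and does not require any finer information about $G$ or the heat kernel.
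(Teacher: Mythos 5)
Your proof is correct, and it takes a genuinely different route from the one the paper intends. The paper's proof of Lemma~\ref{s-replacement} is left to the reader with the hint that one should imitate Lemma~\ref{t-replacement}, replacing thin annuli by thin crescents (shells between nested horoballs resting on the same boundary point) and passing to a limit as the crescent thickness vanishes. You instead apply the strong Markov property once, at the first hitting time $\tau$ of $B_1$, and exploit that $\aut\mathbb{D}$ acts transitively on pairs (horoball, point of its boundary) to make the occupation profile $G(u) = \mathbb{E}_w \int_0^u \chi_B(B_s)\,ds$, $w\in\partial B$, a single universal function. This converts both sides of the inequality into expressions in $G$ alone: $\int_{B_2} g_t(z_0,\cdot)\,dA_{\hyp} = G(t)$ because $z_0\in\partial B_2$, while $\int_{B_1} g_t(z_0,\cdot)\,dA_{\hyp} = \mathbb{E}_{z_0}[\mathbf{1}_{\tau<t}\,G(t-\tau)]$ because a path contributes to the $B_1$ occupation time only after reaching $\partial B_1$. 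The strict inequality then falls out of $\tau>0$ a.s.\ together with the strict monotonicity of $G$. This is cleaner than the crescent-slicing argument---no limiting procedure is needed---and it isolates the geometric mechanism (the positive time delay before entering the inner horoball) more transparently. Two small points worth making explicit: the first inequality in your final chain is strict because $\mathbb{P}_{z_0}(\tau<t)>0$ for every $t>0$ (the heat kernel is everywhere positive), and the argument requires $B_1\subsetneq B_2$ (for $B_1=B_2$ one has $\tau=0$ and equality), which is the case in which the paper applies the lemma; the passage to $t=\infty$ is monotone convergence since $\mathbf{1}_{\tau<t}G(t-\tau)$ is nondecreasing in $t$.
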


The proof is similar to that of the previous lemma except one considers thin crescents instead of annuli.
We leave the details to the reader.

\section{Feynman-Kac formula}
\label{sec:FK}

In this section, we discuss an analogous problem in the context of parabolic PDEs. Consider a potential $V: \mathbb{D} \to \mathbb{R}$, which we assume
to be positive and bounded. 
 We are interested in studying the growth of solutions of the second order parabolic differential equation
\begin{equation}
\label{eq:heat-equation}
\frac{\partial u}{\partial t} = \frac{1}{2} \cdot \Delta_{\hyp} u + V(x) u(x,t), \qquad (x,t) \in \mathbb{D} \times (0,\infty),
\end{equation}
where the initial condition $u_0(x) = u(x, 0)$ is a positive compactly supported function.

\subsection{Basic properties}

Since $p_t(x, y) \to 0$ as $d_{\mathbb{D}}(x,y) \to \infty$, the Feynman-Kac formula (\ref{eq:FK}) guarantees that $u_t$ vanishes on the unit circle, i.e.~$u_t(x) \to 0$ as $|x| \to 1$. 
Applying Green's formula, we find
$$
\int_{\mathbb{D}} \Delta_{\hyp} u_t(x) dA_{\hyp}(x) \, = \, \int_{\mathbb{D}} \Delta u_t(x) dA(x)  \, = \, 0.
$$
In light of the above identity, if we integrate (\ref{eq:heat-equation}) over the unit disk, we obtain the crucial formula
\begin{equation}
\label{eq:heat-derivative}
\frac{d}{dt} \int_{\mathbb{D}} u_t(x) dA_{\hyp}(x) = \int_{\mathbb{D}} V(x) u_t(x) dA_{\hyp}(x),
\end{equation}
which says that locally near a point $x \in \mathbb{D}$, the mass of $u_t$ grows at rate $V$.
We now give an alternative formula for the Lyapunov exponent (\ref{eq:lyapunov}):

\begin{theorem}
\label{beta-growth}
The rate of growth of the solution is given by
\begin{equation}
\label{eq:beta-growth}
\beta_V = \limsup_{t \to \infty} \frac{1}{t} \cdot \mathbb{E}_0 \biggl \{ \exp \int_0^t V(B_s)ds \biggr \},
\end{equation}
irrespective of the initial condition.

\end{theorem}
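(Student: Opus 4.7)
The plan is to compare the quantity $\int_{\mathbb{D}} u_t(x) \, dA_{\hyp}(x)$ to the simpler object $w_t(x) := \mathbb{E}_x\{\exp \int_0^t V(B_s)\,ds\}$, which is itself the Feynman-Kac solution of (\ref{eq:heat-equation}) with initial condition $w_0 \equiv 1$, and then to show that $\limsup_{t\to\infty} \tfrac{1}{t} \log w_t(x)$ is independent of $x \in \mathbb{D}$.

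First I would establish the reciprocity identity
$$
\int_{\mathbb{D}} u_t(x) \, dA_{\hyp}(x) \;=\; \int_{\mathbb{D}} u_0(x)\, w_t(x) \, dA_{\hyp}(x).
$$
Conditioning (\ref{eq:FK}) on the endpoint of Brownian motion writes $u_t(x)$ as the double integral $\int p_t(x,y) u_0(y) \mathbb{E}^{x\to y}[\exp\int_0^t V]\, dA_{\hyp}(y)$, where $\mathbb{E}^{x\to y}$ denotes expectation over the hyperbolic Brownian bridge of length $t$. Because hyperbolic Brownian motion is reversible with respect to $dA_{\hyp}$ (implicit in the symmetry $p_t(x,y)=p_t(y,x)$ noted in Section \ref{sec:probability}), the bridge from $x$ to $y$ coincides in law with the time-reversal of the bridge from $y$ to $x$; the functional $\int_0^t V(B_s)\,ds$ is manifestly invariant under time reversal. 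Swapping $x$ and $y$ by Fubini and using $\int p_t(y,x)\mathbb{E}^{y\to x}[\exp\int V]\, dA_{\hyp}(x) = w_t(y)$ yields the identity.

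Next I would show that $\tilde\beta := \limsup_{t\to\infty} \tfrac{1}{t}\log w_t(x)$ does not depend on $x$. By the Markov property and $V\ge 0$,
$$
w_{t+s}(0)\;=\; \mathbb{E}_0\!\left[w_t(B_s)\exp\int_0^s V(B_r)\,dr\right] \;\ge\; \int_{\mathbb{D}} p_s(0,y)\, w_t(y)\, dA_{\hyp}(y).
$$
Since $w_t$ is a classical (hence continuous) solution of a parabolic equation with bounded coefficients and $p_s(0,\cdot) > 0$, restricting the integral to a small hyperbolic ball around $x$ gives $w_{t+s}(0)\ge c(s,x)\, w_t(x)$. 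Swapping the roles of $0$ and $x$ yields the reverse inequality, so $\tfrac{1}{t}\log w_t(x)$ has the same $\limsup$ at every point, in particular $\tilde\beta = \limsup_{t\to\infty}\tfrac{1}{t}\log \mathbb{E}_0\{\exp\int_0^t V(B_s)\,ds\}$.

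Finally, since $u_0$ is nonnegative, compactly supported, and has nonzero mass, while $w_t$ is continuous and positive, the reciprocity identity implies
$$
c_1\, w_t(x_0) \;\le\; \int_{\mathbb{D}} u_t \, dA_{\hyp} \;=\; \int_{\mathbb{D}} u_0\, w_t \, dA_{\hyp} \;\le\; c_2 \sup_{x \in \supp u_0} w_t(x)
$$
for some $x_0 \in \supp u_0$, and combined with the previous paragraph all three exponential growth rates coincide, giving $\beta_V = \tilde\beta$ independently of $u_0$. The main obstacle is the first step: justifying the time-reversal of hyperbolic Brownian bridges. The equivalent self-adjointness reformulation $\langle e^{tL} u_0, 1\rangle = \langle u_0, e^{tL} 1\rangle$ with $L = \tfrac{1}{2}\Delta_{\hyp}+V$ is formally immediate but requires approximating $1 \notin L^2(dA_{\hyp})$ by an increasing sequence of compactly supported test functions and invoking monotone convergence, which is valid precisely because $V \ge 0$ keeps every integrand nonnegative.
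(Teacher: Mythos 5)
Your overall plan---establish the reciprocity identity $\int_{\mathbb{D}} u_t \, dA_{\hyp} = \int_{\mathbb{D}} u_0\, w_t\, dA_{\hyp}$, then show the exponential growth rate of $w_t(x) = \mathbb{E}_x\{\exp\int_0^t V(B_s)\,ds\}$ is independent of $x$, and conclude by sandwiching---is exactly the structure of the paper's proof, and the reciprocity step is correctly justified via time-reversal invariance of the bridge functional (the paper phrases this tersely as ``the symmetry of the Brownian transition function'').

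The point-independence step, however, has a gap. You write that since $w_t$ is continuous and $p_s(0,\cdot) > 0$, ``restricting the integral to a small hyperbolic ball around $x$ gives $w_{t+s}(0) \ge c(s,x)\, w_t(x)$.'' Continuity of each $w_t$ is only a qualitative statement for fixed $t$: the neighbourhood on which $w_t(y) \gtrsim w_t(x)$ may shrink as $t\to\infty$, so the constant you extract this way is really $c(s,x,t)$, which is useless for computing a $\limsup$ in $t$. What is needed is a comparison uniform in $t$, and that does not follow from regularity of the solution alone. The paper closes this hole by citing a heat-kernel estimate of Davies--Mandouvalos, $p_1(x_1,y) < \phi\bigl(d_{\mathbb{D}}(x_1,x_2)\bigr)\, p_2(x_2,y)$, which combined with the Markov property and $\|V\|_\infty < \infty$ yields the uniform bound $v_t(x_1)/v_{t+2}(x_2) \le \phi\bigl(d_{\mathbb{D}}(x_1,x_2)\bigr) e^{\|V\|_\infty}$ for $t\ge 1$; point-independence of the growth rate then follows immediately. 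A parabolic Harnack inequality for $\tfrac12\Delta_{\hyp}+V$ would serve equally well, but either way it is machinery you must invoke explicitly rather than derive from ``classical hence continuous.'' With that reference inserted, your argument coincides with the paper's.
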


\begin{proof}
From the Feynman-Kac formula, it is easy to see that
\begin{equation}
\label{eq:total-mass-u}
\int_{\mathbb{D}} u_t(x) dA_{\hyp}(x) = \int_{\supp u_0} u_0(x) \cdot \mathbb{E}_x \biggl \{ \exp \int_0^t V(B_s) ds \biggr\} 
  dA_{\hyp}(x).
\end{equation}
Indeed, one only needs to use the symmetry of the Brownian transition function, $p_t(x,y) = p_t(y, x)$.
From  \cite[Theorem 3.1]{DM}, it is clear that $$p_1(x_1, y)  < \phi \bigl (d_{\mathbb{D}}(x_1,x_2) \bigr ) \cdot p_2(x_2, y)$$ for some increasing function $\phi: (0, \infty) \to (0, \infty)$.
Combining this observation with the Markov property of Brownian motion reveals that
$$
v_t(x) := 
\mathbb{E}_x \biggl \{ \exp \int_0^t V(B_s) ds \biggr\}
$$
satisfies
 $$
\frac{v_{t}(x_1)}{v_{t+2}(x_2)} < \phi \bigl (d_{\mathbb{D}}(x_1,x_2) \bigr ) \cdot e^{\|V\|_\infty}, \qquad t \ge 1.
$$
Reversing the roles of $x_1$ and $x_2$ gives an inequality in the other direction. 

The above considerations show that the growth rates of all  functions $v_t(x)$, $x \in \mathbb{D}$ are the same.
However, according to (\ref{eq:total-mass-u}), $u_t(x)$ is a weighted average of $v_t(x)$ over a compact subset of the disk and therefore it must have the same growth rate as well. This proves
  (\ref{eq:beta-growth}).
\end{proof}

Before continuing further, we note that since the total mass of $u_t$ is increasing, the rate of growth of $u_t$ is the same as that of
$$
\hat u_t = \int_0^t u_s(x) ds,
$$
i.e.~
\begin{equation}
\beta_V = \limsup_{t \to \infty} \frac{1}{t} \log \int_{\mathbb{D}} {\hat u}_t(x) dA_{\hyp}(x). 
\end{equation}
In practice, we prefer to work with $\hat u_t$ since it is slightly easier  to estimate.

\subsection{Potentials supported on gardens}

We now turn our attention to Theorem \ref{sparse-analogue}.
For the proof, we may assume that $\supp u_0 \cap \mathcal G = \emptyset$.
In view of (\ref{eq:heat-derivative}), in order to obtain an upper bound for $\beta_V$, it suffices to prove a {\em non-concentration estimate} -- i.e.~to show that most of the mass of $u_t(x)$ is located outside of $\mathcal G$.

\begin{lemma}
\label{eq:main-fk-lemma}
If $p < p_0(R)$ is sufficiently small, then for any horoball $B \subset \mathcal G$ and any $t > 0$, the ratio
\begin{equation}
\mathcal Q_t(B, B^*) \, := \, \frac{\int_B {\hat u}_t(x) dA_{\hyp}(x)}{\int_{B^*} {\hat u}_t(x) dA_{\hyp}(x)}
\, \le \, Ce^{-R/2},
\end{equation}
where $B^* = \{ z \in \mathbb{D} : d_{\mathbb{D}}(z, B) \le R/2 \}$.
\end{lemma}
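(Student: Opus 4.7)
The plan is to translate the ratio $\mathcal Q_t(B,B^*)$ into a probabilistic quantity via the symmetry of the Brownian transition function, and then to exploit the fact that hyperbolic Brownian motion spends much more time in the collar $B^*\setminus B$ than in $B$ itself. Writing $L_s^E := \int_0^s \chi_E(B_r)\,dr$, the self-adjointness of $P_s$ on $L^2(dA_{\hyp})$ gives
$$\int_B \hat u_t(x)\,dA_{\hyp}(x) \,=\, \int_{\mathbb{D}} u_0(y)\, I_B(y,t)\, dA_{\hyp}(y),\qquad I_B(y,t) := \mathbb{E}_y\!\left[\int_0^t \chi_B(B_s)\, e^{p L_s^{\mathcal G}}\, ds\right],$$
together with the analogous identity for $I_{B^*}(y,t)$. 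Since $u_0 \ge 0$ and (by assumption) $\supp u_0 \cap \mathcal G = \emptyset$, it suffices to prove the pointwise bound $I_B(y,t) \le Ce^{-R/2}\, I_{B^*}(y,t)$ for each $y \notin \mathcal G$.

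Decompose the path of $B_s$ into its successive excursions $[\tau_k^-,\tau_k^+]$ into $B^*$, and let $t_k^B$ and $t_k^{B^*}$ denote the time spent inside $B$ and the total duration of the excursion inside $B^*$. The separation hypothesis forbids such an excursion from visiting any other horoball of $\mathcal G$, so
$$L_s^{\mathcal G} \,-\, L_{\tau_k^-}^{\mathcal G} \;=\; L_s^B \,-\, L_{\tau_k^-}^B, \qquad s \in [\tau_k^-,\tau_k^+].$$
Sandwiching $e^{pL_s^{\mathcal G}}$ between $e^{pL_{\tau_k^-}^{\mathcal G}}$ and $e^{pL_{\tau_k^-}^{\mathcal G}}e^{pt_k^B}$ on this interval yields the excursion-wise bounds
$$\int_{\tau_k^-}^{\tau_k^+}\! \chi_B(B_s)\, e^{pL_s^{\mathcal G}}\,ds \;\le\; e^{pL_{\tau_k^-}^{\mathcal G}}\, t_k^B\, e^{pt_k^B},\qquad \int_{\tau_k^-}^{\tau_k^+}\! \chi_{B^*}(B_s)\, e^{pL_s^{\mathcal G}}\,ds \;\ge\; e^{pL_{\tau_k^-}^{\mathcal G}}\, t_k^{B^*}.$$

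Now apply the strong Markov property at each $\tau_k^-$. The entry point lies on the horocycle $\partial B^*$, on which the one-parameter parabolic subgroup of $\aut \mathbb{D}$ fixing the base point of the horoball acts transitively while preserving $B$ and $B^*$; by the conformal invariance of hyperbolic Brownian motion, the conditional laws of $t_k^B$ and $t_k^{B^*}$ given $\mathcal F_{\tau_k^-}$ are deterministic. Writing $E_p := \mathbb{E}[t_k^B e^{pt_k^B}]$ and $F_0 := \mathbb{E}[t_k^{B^*}]$, Lemma \ref{bm-hates-horoballs}(i) applied to the horoball $B^*$ gives $F_0 \asymp 1$, while Lemma \ref{bm-hates-horoballs}(ii) gives $\mathbb{E}[t_k^B] \asymp e^{-R/2}$. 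The exponential tail $\mathbb{P}(t_k^B > s) \le Ce^{-\gamma s}$ from Lemma \ref{bm-hates-horoballs2} then yields $E_p \le Ce^{-R/2}$ provided $p_0(R)$ is chosen smaller than $\gamma$. Taking tower expectations and summing over $k$ produces $I_B(y,t) \le (E_p/F_0)\, I_{B^*}(y,t) \le Ce^{-R/2}\, I_{B^*}(y,t)$.

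The main technical point is the Feynman-Kac weight: the factor $e^{pt_k^B}$ appearing in $E_p$ could \emph{a priori} overwhelm the small expectation $\mathbb{E}[t_k^B] \asymp e^{-R/2}$, and it is precisely the exponential tail of Lemma \ref{bm-hates-horoballs2} that allows one to absorb it into the constant once $p$ is small relative to the excursion exponent $\gamma$. A minor secondary issue is that the excursion in progress at time $t$ is only partially completed; the upper bound on $I_B$ applied to the full excursion remains valid, and since at most one excursion is ever partial, its mismatch with the lower bound for $I_{B^*}$ can be absorbed into the constant.
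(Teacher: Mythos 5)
Your proposal follows essentially the same route as the paper: reduce to a probabilistic quantity via the self-adjointness of the transition semigroup, decompose by entry into $B^*$, use the parabolic symmetry so that the entry point is immaterial, invoke Lemma \ref{bm-hates-horoballs} for the $e^{-R/2}$ factor and Lemma \ref{bm-hates-horoballs2} to argue that the Feynman--Kac weight is essentially frozen during an excursion when $p$ is small. Your observation that $L^{\mathcal G}_s-L^{\mathcal G}_{\tau_k^-}=L^B_s-L^B_{\tau_k^-}$ on $[\tau_k^-,\tau_k^+]$, because of the separation hypothesis, is exactly the mechanism the paper is implicitly using when it asserts that only the time in $B^*$ matters for the weight.

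There is one genuine gap, precisely at the point you wave away as ``a minor secondary issue.'' Your excursion-wise comparison gives $E_p/F_0\le Ce^{-R/2}$ only for \emph{completed} excursions, where the lower bound $F_0=\mathbb{E}[t_k^{B^*}]\asymp 1$ comes from Lemma \ref{bm-hates-horoballs}(i) applied to the horoball $B^*$ with the excursion run to its natural (last-exit) end. For the one excursion still in progress at time $t$, your upper bound on the numerator uses the full-excursion quantity $E_p$, but the matching denominator contribution is only $\mathbb{E}[\int_{\tau_K^-}^{t}\chi_{B^*}\,ds]$, which can be far smaller than $F_0$ when $t-\tau_K^-$ is small. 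This is not absorbed into the constant: if $t$ is only slightly past the first entry time, the truncated excursion is essentially the entire contribution to both $I_B$ and $I_{B^*}$, and what one actually needs is a bound on
$$
\sup_{s>0}\,\frac{\int_B g_s(x_0,x)\,dA_{\hyp}(x)}{\int_{B^*} g_s(x_0,x)\,dA_{\hyp}(x)}
$$
uniform in $s$, where $x_0\in\partial B^*$. That uniformity is not automatic from $\int_B g_\infty/\int_{B^*}g_\infty\asymp e^{-R/2}$: one must know the ratio is monotone (or at least bounded by its $s=\infty$ value), which is the content of Lemma \ref{s-replacement} (together with Lemma \ref{t-replacement}). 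The paper introduces these two monotonicity lemmas precisely to close this truncation gap, and your argument needs them too. Two smaller remarks: ``the conditional laws \dots are deterministic'' should read ``do not depend on the entry point'' (by the parabolic symmetry); and note that Lemma \ref{bm-hates-horoballs}(i)--(ii) give expectations of the \emph{total} occupation times $\ell(B^*)$ and $\ell(B)$, which matches your $F_0$ and $E_p$ only if your ``excursion'' runs from first entry to last exit (as in the paper's definition), not if it is an It\^o-style single crossing of $B^*$.
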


Temporarily assuming Lemma \ref{eq:main-fk-lemma}, note that since $V = 0$ on $\mathbb{D} \setminus \mathcal G$,
\begin{equation}
\int_{\mathbb{D}} V(x) {\hat u}_t(x) dA_{\hyp}(x) \le Ce^{-R/2} \int_{\mathbb{D}} {\hat u}_t(x) dA_{\hyp}(x), \qquad t > 0.
\end{equation}
Combining with (\ref{eq:heat-derivative}), we arrive at the inequality
\begin{equation}
\frac{d}{dt} \int_{\mathbb{D}} {\hat u}_t(x) dA_{\hyp}(x) \le C(u_0, V) + Ce^{-R/2} \int_{\mathbb{D}} {\hat u}_t(x) dA_{\hyp}(x),
\end{equation}
from which Theorem \ref{sparse-analogue} follows after integration.

\subsection{Proof of the non-concentration estimate}

For each $t \in (0, \infty)$, consider the function
$$
\hat v_t(x) := \int_0^t v_s(x) ds = \int_0^t \mathbb{E}_x \biggl \{ \exp \int_0^s V(B_r) dr \biggr\} ds.
$$
Inspecting (\ref{eq:total-mass-u}), we see that the total mass of $\hat u_t$ can be computed using Brownian paths that emanate from a point in the support of $u_0$.
To compute $\int_{B^*} {\hat u}_t(x) dA_{\hyp}(x)$, we  only need to consider Brownian paths that cross $\partial  B^*$ before time $t$, where the contribution of a Brownian path to the integral comes from the duration of time it spends in $B^*$. 
We let $\Pi$ denote the collection of all such paths. We partition $\Pi$ into disjoint collections $\Pi(x_0, t_0)$, indexed by $x_0 \in \partial B^*$ and $0 < t_0 < t$, where $x_0$ and $t_0$ are respectively the location and time of first entry into $B^*$.

For a pair $(x_0, t_0)$ as above, consider the ratio
\begin{equation}
\mathcal Q_t^{x_0,t_0}(B, B^*)
 \, := \, \frac{\int_B {\hat v}_{t - t_0}(x_0) dA_{\hyp}(x)}{\int_{B^*} {\hat v}_{t-t_0}(x_0) dA_{\hyp}(x)}.
\end{equation}
To prove Theorem \ref{sparse-analogue}, it suffices to show that
$\mathcal Q_t^{x_0,t_0}(B, B^*) \le Ce^{-R/2}$ for all admissible pairs $(x_0, t_0)$ with a uniform constant $C > 0$. Here, we are using the following elementary fact: if $(X, \mu)$ is a measure space and $u, v: X \to [0, \infty)$ are positive functions on $X$, then
$\int_X u(\xi) d\mu \, \bigl / \,\int_X v(\xi) d\mu \le \sup_{\xi \in X} u(\xi)/v(\xi)$.

Now if Brownian motion enters a horoball, it does not want to stay there for very long:
according to Lemma \ref{bm-hates-horoballs2}, the probability that the excursion time $\tau_{B^*}  > L$ is at most $Ce^{-\gamma L}$ for some constant $\gamma > 0$. Recall that this means that at time $t_0 + \tau_{B^*}$, Brownian motion will leave $B^*$ forever and never return again.
The Feynman-Kac formula provides a bonus for a path to stay in $B^*$, however, as we shall now explain, if the exponent $p > 0$ is small, this bonus is negligible. 

For this purpose, the coarse estimate
$$
\exp \biggl (\int_{t_0}^{t_0+\tau_{B^*}} V(B_s)ds \biggr ) \le e^{p(L+1)}
$$
is sufficient.
The above estimate tells us that in the range $p \in (0, \gamma - \delta)$, if we want to determine the denominator 
$\int_{B^*} {\hat v}_{t-t_0}(x_0) dA_{\hyp}(x)$ up to an additive error of $\varepsilon$, we may restrict our attention to Brownian paths for which $\tau_{B^*} < L_0(\varepsilon)$. 
By making $p > 0$ sufficiently small, we can ensure that $1 < e^{pL_0} < 1  + \varepsilon$ which means that the exponential term in the Feynman-Kac formula (\ref{eq:FK}) is essentially frozen during an excursion in $B^*$.
Clearly, similar remarks hold for the numerator as well.

The above considerations show that $ \mathcal Q_t^{x_0,t_0}(B, B^*)$ can be estimated from above by  the ratio of the expected times that Brownian motion spends in $B$ and $B^*$ when started at $x_0$ and simulated for time $t - t_0$\/:
\begin{equation}
\label{eq:estimating-quotient}
\mathcal Q_t^{x_0,t_0}(B, B^*) \lesssim \frac{\int_{B} g_{t-t_0}(x_0, x) dA_{\hyp}(x)}{\int_{B^*} g_{t-t_0}(x_0, x) dA_{\hyp}(x)}.
\end{equation}
In view of Lemma \ref{s-replacement} and Lemma \ref{bm-hates-horoballs}(ii), this is $\le Ce^{-R/2}$.
This completes the proof of Lemma \ref{eq:main-fk-lemma}.

\section{Brownian integral means spectrum}
\label{sec:BIMS}

We now return to the original problem involving conformal mappings. 
To prove Theorem \ref{sparse-thm}, we translate the Feynman-Kac argument from the previous section.
In this section, we give a direct translation which mimics the previous section as much as possible.
Later, we will give a slightly simplified account of this argument  which does not involve Brownian motion.

Let $f : \mathbb{D} \to \mathbb{C}$ be a conformal mapping.
Fix $p \in \mathbb{C} \setminus \{0\}$ and consider the functions 
\begin{equation}
u_t(x) = |f'(x)^p|\cdot p_t(x)
\end{equation}
 and
\begin{equation}
\hat u_t(x) = \int_0^t u_s(x) ds =  |f'(x)^p| \cdot g_t(x).
\end{equation}
Differentiating, we discover \begin{align}
\frac{d}{dt} \int_{\mathbb{D}} |f'(x)^p| \cdot p_t(x) dA_{\hyp}(x) & = \frac{1}{2} \int_{\mathbb{D}} |f'(x)^p| \cdot \Delta_{\hyp} [p_t(x)] dA_{\hyp}(x), \\
& =  \frac{1}{2} \int_{\mathbb{D}} \Delta_{\hyp} |f'(x)^p| \cdot p_t(x) dA_{\hyp}(x), \\
\label{eq:growth-of-u}
& =  \int_{\mathbb{D}} V(x) |f'(x)^p| \cdot p_t(x) dA_{\hyp}(x),
\end{align}
where $V = \frac{1}{2} \cdot |p|^2 |n_f/\rho|^2$ and $n_f := f''/f'$ is the {\em non-linearity} of $f$.

The above identity suggests that  estimating integral means is quite similar to studying the growth rate of  solutions of parabolic equations given by Feynman-Kac formula; however, now the mass flows differently: in the Feynman-Kac setting, the mass increases at rate $V$ near a point equally in all directions, while in the conformal setting, the mass is spread out unevenly.
 
The other property of the ``Brownian spectrum'' of a conformal mapping that we need is that it is larger than the usual integral means spectrum:
\begin{lemma}
\label{domination-lemma}
If we define 
\begin{equation}
\label{eq:bb1}
\tilde \beta_f(p)  := \limsup_{t \to \infty} \frac{1}{t} \log \int_{\mathbb{D}} u_t(x) dA_{\hyp}(x),
\end{equation}
then   $\beta_f(p) \le \tilde \beta_f(p)$ for any conformal map $f$. 
\end{lemma}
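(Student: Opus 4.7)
The plan is to exploit the radial symmetry of the hyperbolic heat kernel $p_t(0,\cdot)$ together with the linear-displacement estimate~(\ref{eq:linear-displacement}). Since $f$ is univalent, $f'$ is non-vanishing on $\mathbb{D}$, so $f'^{p}$ is a single-valued analytic function after fixing a branch of $\log f'$, and hence $|f'(z)^{p}|$ is subharmonic. By the sub-mean-value property,
$$
M(r,p):=\int_{0}^{2\pi} |f'(re^{i\theta})^{p}|\,d\theta
$$
is a nondecreasing function of $r\in[0,1)$. This monotonicity is the key ingredient.

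Because $p_{t}$ depends only on the hyperbolic radius $s:=d_{\mathbb{D}}(0,\cdot)$, hyperbolic polar decomposition together with the identity $\sinh(s)=2r(s)/(1-r(s)^{2})$, where $r(s)=\tanh(s/2)$, gives
$$
\int_{\mathbb{D}} |f'^{p}|\,p_{t}\,dA_{\hyp}
\;=\;\int_{0}^{\infty} p_{t}(s)\,\sinh(s)\,M(r(s),p)\,ds.
$$
Fix $\beta'<\beta_{f}(p)$. The definition~(\ref{eq:def-beta}) provides a sequence $r_{n}\to 1^{-}$ with $M(r_{n},p)\ge(1-r_{n})^{-\beta'}$; setting $t_{n}:=\log\frac{1+r_{n}}{1-r_{n}}$ we have $M(r_{n},p)\gtrsim e^{\beta' t_{n}}$. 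Monotonicity gives $M(r(s),p)\ge M(r_{n},p)$ for $s\ge t_{n}$, so
$$
\int_{\mathbb{D}}|f'^{p}|\,p_{t}\,dA_{\hyp}
\;\gtrsim\; e^{\beta' t_{n}}\,\mathbb{P}_{0}\bigl(d_{\mathbb{D}}(0,B_{t})\ge t_{n}\bigr).
$$

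To close the argument, set $t:=(1+\varepsilon)t_{n}$ for a small $\varepsilon>0$. The linear-displacement property~(\ref{eq:linear-displacement}), which asserts $d_{\mathbb{D}}(0,B_{t})/t\to 1$ in probability, forces $\mathbb{P}_{0}(d_{\mathbb{D}}(0,B_{t})\ge t_{n})\to 1$ as $n\to\infty$, and therefore
$$
\frac{1}{t}\log\int_{\mathbb{D}}|f'^{p}|\,p_{t}\,dA_{\hyp}
\;\ge\;\frac{\beta' t_{n}-O(1)}{(1+\varepsilon)\,t_{n}}
\;\longrightarrow\;\frac{\beta'}{1+\varepsilon}.
$$
Hence $\tilde\beta_{f}(p)\ge\beta'/(1+\varepsilon)$, and letting $\varepsilon\to 0$ and $\beta'\to\beta_{f}(p)$ finishes the proof. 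The only genuine subtlety is that the $\limsup$ definition of $\beta_{f}(p)$ supplies the bound $M(r_{n},p)\gtrsim(1-r_{n})^{-\beta'}$ only along a subsequence; monotonicity of $M(\cdot,p)$ is what lets us pass from this pointwise information to a lower bound on the full integral, and it does so uniformly for every $p\in\mathbb{C}$.
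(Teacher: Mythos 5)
Your proposal is correct and follows essentially the same route as the paper's proof: both rest on the monotonicity of the integral means $r\mapsto\int_{S_r}|f'(z)^p|\,d\theta$ (which you justify via subharmonicity of $|f'(z)^p|$) combined with the linear-displacement property~(\ref{eq:linear-displacement}) of hyperbolic Brownian motion. The paper's version is terser — it bounds $\int_{\mathbb{D}}u_t\,dA_{\hyp}$ directly below by a constant times the integral means at radius $r((1-\varepsilon)t)$ and concludes $(1-\varepsilon)\beta_f(p)\le\tilde\beta_f(p)$ — whereas you carry out the same estimate more explicitly via hyperbolic polar coordinates and a subsequence $r_n\to1^-$ realizing the $\limsup$, but the underlying idea is identical.
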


\begin{proof}
Fix an $\varepsilon > 0$. Since the integral means $\int_{S_r} |f'(z)^p| \, d\theta$ are increasing in $r$, 
(\ref{eq:linear-displacement}) shows that
\begin{equation}
\label{eq:ut-integral}
 \int_{\mathbb{D}} u_t(x) dA_{\hyp}(x) \ge c \int_{S_{r((1-\varepsilon)t)}} |f'(z)^p| \, d\theta, \qquad t \ge t_0(\varepsilon),
\end{equation}
where $r((1-\varepsilon)t)$ is chosen so that $d_\mathbb{D}(0, S_{r((1-\varepsilon)t)}) = (1-\varepsilon)t$. Hence, $(1-\varepsilon) \beta_f(p) \le \tilde \beta_f(p)$. Since $\varepsilon > 0$ was arbitrary, the proof is complete.
\end{proof}

\begin{remark}In reality, with exponentially small probability, Brownian particles can travel farther than expected, so the two characteristics need not be equal (and in fact, are never equal unless both are 0).
\end{remark}

\subsection{Sparse conformal mappings}

We now restrict to the case when $f = \tilde w^{k \mu^+}$ with $\mu$ supported on the garden $\mathcal G \subset \mathbb{D}$ satisfying the sparsity condition. Since finitely many horoballs have no effect on the dimension, we may assume that
$$\mathcal G \subset \{z \in \mathbb{D}: r_\delta < |z| < 1\},$$
for any choice of $r_\delta \in [1/2, 1)$. More precisely, we may write $\mathcal G = \mathcal G_1 \sqcup \mathcal G_2$ where $\mathcal G_1$ consists of horoballs which intersect $B(0,r_\delta)$ and $\mathcal G_2$ consists of horoballs that do not. Disjointness ensures that the collection $\mathcal G_1$ is finite. If $\mu_1 = \chi_{\mathcal G_1} \cdot \mu$ and $\mu_2 = \chi_{\mathcal G_2} \cdot \mu$, then by Sto\"ilow factorization $\tilde w^{k \mu^+} =  \tilde w^{k \mu_1^+} \circ  \tilde w^{\mu_2^+}$. The equality
$$
\Mdim {\tilde w}^{k \mu^+}(\mathbb{S}^1) = \Mdim {\tilde w}^{k \mu_2^+}(\mathbb{S}^1)
$$
now follows since ${\tilde w}^{k \mu_2^+}(\mathbb{S}^1)$ intersects the closure of ${\tilde w}^{k \mu_2^+}(\mathcal G_1)$ in a finite set and the fact that analytic mappings are differentiable.

We now restrict our attention to the case when $0 < k < 0.49$. According to an (improved version of) E.~Dyn'kin's estimate \cite{dynkin} for non-linearity, for any $\delta > 0$, we can find an $0< r_\delta < 1$ so that
\begin{equation}
\label{eq:vbound}
V(z) \, \le \, V_2^\delta(z) \, := \, 
\begin{cases}
               Ck^2|p|^2  e^{-(2-2k-\varepsilon)S} + \delta,   &  S = d_{\mathbb{D}}(z, \mathcal G) < R/2, \\
                              Ck^2|p|^2  e^{-(2-2k-\varepsilon)R/2} + \delta, & d_{\mathbb{D}}(z, \mathcal G) \ge R/2 \text{ and }|z| > r_\delta, \\
               18 k^2|p|^2 + \delta,               &  |z| < r_\delta,
           \end{cases}
\end{equation}
see  Appendix \ref{sec:dynkin}.
 The assumption $k < 0.49$ guarantees that $V_2 := V_2^\delta - \delta$ decays fast enough away from the support of $\mu$. It is possible that this is a merely technical restriction imposed by Dyn'kin's estimate and is not strictly necessary.
 
  The bound on the compact set $\{z : |z| < r_\delta\}$ is unimportant since it plays no role in determining the integral means spectrum. In order to define a function on the unit disk, we have used the Bloch norm bound  $\|\log f'\|_{\mathcal B} \le 6k$ guaranteed for a conformal mapping with a $k$-quasiconformal extension.

Like in the previous section, we prefer to work with $\hat u_t$ instead of $u_t$. Since the total mass 
of $u_t$ is increasing in $t$, we have
\begin{equation}
\label{eq:bb2}
\tilde \beta_f(p)  = \limsup_{t \to \infty} \frac{1}{t} \log \int_{\mathbb{D}} \hat u_t(x) dA_{\hyp}(x).
\end{equation}

Integrating (\ref{eq:growth-of-u}) with respect to time, we obtain
\begin{equation}
\label{eq:wbound}
\frac{d}{dt} \int_{\mathbb{D}} {\hat u}_t(x) dA_{\hyp}(x) \le C(p, f) + \int_{\mathbb{D}} V(x) {\hat u}_t(x) dA_{\hyp}.
\end{equation}
Since $V_2^\delta \le  C k^2|p|^2 e^{-R/2} + \delta$ on $\{x : r_\delta < |x| < 1 \} \setminus \bigcup B_j^*$, to prove Theorem \ref{sparse-thm2},  it suffices to show that
\begin{equation}
\label{eq:cm-quotient}
\frac{\int_{B^*} V_2^\delta(x){\hat u}_t(x) dA_{\hyp}(x)}{\int_{B^*} {\hat u}_t(x) dA_{\hyp}(x)} \le
 C k^2|p|^2 e^{-R/2} + \delta, \qquad k|p| < c/R,
\end{equation}
for each $B^*$ associated to a horoball $B$ in $\mathcal G$. Assuming (\ref{eq:cm-quotient}) for the moment, we have
\begin{equation}
\label{eq:cm-quotient2}
\frac{\int_{\mathbb{D}} V(x){\hat u}_t(x) dA_{\hyp}(x)}{\int_{\mathbb{D}} {\hat u}_t(x) dA_{\hyp}(x)} \le
 C k^2 |p|^2 e^{-R/2} + \delta
\end{equation}
which gives $\tilde \beta_{\tilde w^{k\mu^+}}(p) \le Ce^{-R/2}k^2|p|^2 + \delta$ after integration. 
Using Lemma \ref{domination-lemma} and the fact that $\delta > 0$ was arbitrary proves
$\beta_{\tilde w^{k\mu^+}}(p) \le Ce^{-R/2}k^2|p|^2$, which is the statement of Theorem \ref{sparse-thm2}.

\subsection{Non-concentration estimate}

Before proving (\ref{eq:cm-quotient}), let us first show the slightly simpler statement
\begin{equation}
\label{eq:cm-quotient3}
\frac{\int_{B^*} V_2^\delta(x){\hat u}_\infty(x) dA_{\hyp}(x)}{\int_{B^*} {\hat u}_\infty(x) dA_{\hyp}(x)} \le
 C k^2 |p|^2 e^{-R/2} + \delta.
\end{equation}
This relies on the ``freezing lemma'' which says that for the purpose of estimating integral means,
 $|f'(z)^p|$ is essentially constant on horoballs:

\begin{lemma}
\label{freezing-lemma}
Suppose $6k|p| < 0.49$. Then,
\begin{equation}
\label{eq:freezing1}
 \int_{B^*}  | f'(z)^p | \, g_\infty(z) dA_{\hyp}(z) \asymp  \diam B^* \cdot |f'(z_{B^*})|^p.
\end{equation}
If additionally  $k < 0.49$ and $k|p| < c/R$, then
 \begin{equation}
 \label{eq:freezing2}
 \int_{B^*} V_2(z) | f'(z)^p |  \, g_\infty(z) dA_{\hyp}(z) \asymp Ck^2 |p|^2 e^{-R/2} \cdot   \diam B^* \cdot |f'(z_{B^*})|^p.
 \end{equation}
 \end{lemma}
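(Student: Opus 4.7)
My plan is to use the Becker--Pommerenke Bloch norm bound $\|\log f'\|_{\mathcal B}\le 6k$ (valid for any conformal map admitting a $k$-quasiconformal extension) to compare $|f'(z)^p|$ with $|f'(z_{B^*})|^p$ up to a factor $e^{\pm\alpha d_{\mathbb D}(z,z_{B^*})}$, where $\alpha:=6k|p|$, and then to exploit the key geometric fact that although $B^*$ has infinite hyperbolic area, the weight $g_\infty(z)\,dA_{\hyp}(z)$ concentrates its mass in a bounded hyperbolic neighborhood of the top point $z_{B^*}$. The hypothesis $6k|p|<0.49$ is exactly what forces $\alpha<1/2$, the critical exponent below which the relevant weighted integral converges.

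For the upper bound in (\ref{eq:freezing1}), inserting the Bloch estimate reduces matters to proving $\int_{B^*}e^{\alpha d_{\mathbb D}(z,z_{B^*})}g_\infty(z)\,dA_{\hyp}(z)\lesssim\diam B^*$. I would transfer to the upper half-plane by a M\"obius map sending the origin of $\mathbb D$ to $i\in\mathbb H$ and $B^*$ to $\{\im z>h\}$; since the horoballs are confined to $\{|z|>r_\delta\}$, one has $h\gg 1$ and $\diam B^*\asymp 1/h$. In these coordinates $z_{B^*}=ih$, $g_\infty(i,x+iy)\asymp y/(x^2+y^2)$, and a direct computation gives
$$
e^{\alpha d_{\mathbb H}(ih,\,x+iy)}\;\asymp\;
\begin{cases}(y/h)^\alpha,&|x|\le y,\\ (x^2/(hy))^\alpha,&|x|>y.\end{cases}
$$
Splitting the integral at $|x|=y$ and carrying out the $x$-integration first yields a bound of order $1/((1-\alpha)(1-2\alpha)h)\asymp\diam B^*$; the convergence of the $x$-integration in the horizontal region is what requires $\alpha<1/2$. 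The matching lower bound is obtained by restricting to a hyperbolic ball of radius $1$ around $z_{B^*}$, where all factors are comparable to~$1$ and the base integral $\int g_\infty\,dA_{\hyp}$ is already of order $\diam B^*$ (cf.\ Lemma \ref{bm-hates-horoballs}).

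For (\ref{eq:freezing2}) I would decompose $B^* = B\sqcup(B^*\setminus B)$. On $B$ the Dyn'kin bound gives $V_2(z)\asymp k^2|p|^2$, so applying (\ref{eq:freezing1}) with $B$ in place of $B^*$ produces $\int_B V_2|f'|^p g_\infty\,dA_{\hyp}\asymp k^2|p|^2\diam B\cdot|f'(z_B)|^p$. The factor $\diam B\asymp e^{-R/2}\diam B^*$ supplies the required exponential decay, while $|f'(z_B)|^p\asymp|f'(z_{B^*})|^p$ follows from Bloch combined with $d_{\mathbb D}(z_B,z_{B^*})\le R/2$ and the hypothesis $k|p|<c/R$. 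On the shell $B^*\setminus B$ the Dyn'kin bound $V_2(z)\lesssim k^2|p|^2 e^{-(2-2k-\varepsilon)d(z,B)}$ decays exponentially away from $B$; replaying the half-plane computation with this additional weight shows the shell contribution is absorbed into the main term.

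The technical heart, and the step I expect to be the main obstacle, is the weighted half-plane integral above: one must check uniformly over all admissible horoballs that the Bloch exponential factor $e^{\alpha d}$ is absorbed into the geometric decay of $g_\infty\,dA_{\hyp}$. The factor $1/((1-\alpha)(1-2\alpha))$ shows that the singularity at $\alpha=1/2$ is a genuine feature of horoball geometry, and any attempt to weaken the hypothesis $6k|p|<0.49$ would demand a sharper input than the plain Bloch norm.
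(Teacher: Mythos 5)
Your proposal is correct and follows essentially the same strategy as the paper: Bloch norm comparison of $|f'(z)^p|$ to $|f'(z_{B^*})|^p$ with the integrability threshold $6k|p|<1/2$ for (\ref{eq:freezing1}), and domination by the innermost piece for (\ref{eq:freezing2}). The only cosmetic differences are that the paper performs the weighted integral directly in $\mathbb{D}$ using $g_\infty(z)\,dA_{\hyp}(z)\asymp |dz|^2/(1-|z|)$ rather than transferring to $\mathbb{H}$, and it decomposes $B^*\setminus B$ into unit-thick horoball shells $S^m$ (noting the contributions decay geometrically because $2-2k>1.02$, which is exactly where $k<0.49$ enters) rather than treating $B^*\setminus B$ as a single piece.
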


 \begin{proof}
 The lemma follows from the Bloch norm bound $\|\log f'\|_{\mathcal B} \le 6k$ mentioned earlier. 
For the first statement, it is clear that the integral over the top half of $B^*$ is comparable
to $ \diam B^* \cdot |f'(z_{B^*})|^p$. This proves the lower bound. 
For the upper bound, the Bloch norm bound implies
$$
 \int_{B^*}  | f'(z)^p | \, g_\infty(z) dA_{\hyp}(z) \lesssim
 | f'(z_{B^*})^p | \cdot \int_{B^*} \biggl (\frac{1-|z_{B^*}|}{1-|z|} \biggr)^{6k|p|} \,\frac{|dz|^2}{1-|z|}.
 $$
 The right hand side is integrable provided that $6k|p| < 1/2$. By asking for $6k|p| < 0.49$, we ensure that the integral over the top half of $B^*$ controls the integral over the bottom half.

 For $0 \le m \le n = \lceil R/2 \rceil$, set $B^m = \{z \in B^* : d_\mathbb{D}(z, B) \le m\}$, $S^m = B^m \setminus B^{m-1}$, $S^0 = B$.
 Note that  the condition $k|p| < c/R$ ensures that
 $ |f'(z_{B^m})|^p \asymp  |f'(z_{B^*})|^p$ for all $m=0,1,\dots, n$.
For the second statement, we subdivide $B^* \setminus B$ into shells
 and apply
 the estimate (\ref{eq:vbound}) on each shell. As $m$ increases, the contributions of $S^m$ decay exponentially (since $2 - 2k > 1.02$), so the integral in
  (\ref{eq:freezing2}) is dominated by the integral over $B$.
  \end{proof}
 
Equation (\ref{eq:cm-quotient3}) follows after dividing  (\ref{eq:freezing2}) by  (\ref{eq:freezing1}).
It remains to replace ``$\infty$'' with ``$t$.'' This last step is not necessary when one uses the Becker-Pommerenke method below, nevertheless it is quite easy with help of Lemma \ref{t-replacement}. Since $V_2$ is small on shells $n$ and $n-1$, we need not worry about them in the numerator.
Estimating the numerator (with two shells removed) from above and the denominator from below, we get
\allowdisplaybreaks
\begin{align*}
\int_{B^{n-2}} V_2(z) |f'(z)^p| \, g_t(z) dA_{\hyp}(z)  & \lesssim |f'(z_{B^*})^p| \cdot \sum_{m=0}^{n-2}  V_2(z_{B^m}) \cdot g_t(z_{B_m}), \\
  & \lesssim |f'(z_{B^*})^p|  \cdot g_t(z_{B_{n-2}}) \cdot \sum_{m=0}^{n-2}  \frac{V_2(z_{B^m})}{e^{n-m}}, \\
    & \lesssim |f'(z_{B^*})^p| \cdot g_t(z_{B_{n-2}}) \cdot \frac{k^2|p|^2}{e^{n}}
\end{align*}
and
$$
\int_{B^*} |f'(z)^p| \, g_t(z) dA_{\hyp}(z) \gtrsim |f'(z_{B^*})^p| \cdot g_t(z_{B_{n-2}}).
$$
Hence, (\ref{eq:cm-quotient}) follows after division. This completes the proof of 
 Theorem \ref{sparse-thm2}.

\section{Becker-Pommerenke argument}
\label{sec:bp-argument}

We now give a slightly simplified account of the above argument using the framework of Becker and Pommerenke for estimating integral means as presented in \cite{qcdim}.
For convenience, we switch over to the upper half-plane.
Let $\mathcal G = \bigcup B_j$ be a collection of horoballs in $\mathbb{H}$ such that $d_{\mathbb{H}}(B_i, B_j) > R$ for $i \ne j$,
and suppose $\mu \in M(\Hbar)$ is a Beltrami coefficient with $\|\mu\|_\infty \le 1$ whose support is contained in the reflected garden $\overline{\mathcal G}$.
Without loss of generality, one can assume that both $\mathcal G$ and $\mu$
are invariant under $z \to z+1$ so that $f = \tilde w^{k \mu}$ satisfies the periodicity condition $f(z+1)=f(z)+1$ needed in \cite{qcdim}. In this case, $\mathcal G$ is contained in $\{ z \in \mathbb{H} : \im z < 2 \}$. For a horoball $B$ in the upper half-plane, let $B^* = \{z \in \mathbb{H}: d_{\mathbb{H}}(z, B) < R/2 \}$. The separation condition ensures us that the horoballs $B_j^*$ are disjoint. As usual, we use the notation $z_{B^*}$ to denote the top point of $B^*$. 

Let $A(t)$ be the rectangle $[0,1] \times [t, 2] \subset \mathbb{H}$ and consider the function
 \begin{equation}
 \label{eq:u-def}
u(t) :=  \int_{A(t)}  | f'(z)^p |  \dzy.
\end{equation}
Taking the second derivative like in \cite[Section 7]{qcdim}, we get
\begin{equation}
\label{eq:improved-smirnov}
u''(t) = \frac{|p|^2}{4t^2}  \int_{A(t)}  | f'(z)^p | \,  \biggl | \frac{2n_f}{\rho_{\mathbb{H}}} \biggr |^2 \dzy + \mathcal O_p(1),
\end{equation}
where the $\mathcal O_p(1)$ term comes from the top side of $A(t)$. Since we are interested in the asymptotic behaviour as $t \to 0^+$, this term is harmless. For a set $K \subset \mathbb{H}$, let us write
$$
\mathcal Q(K) := \frac{ \int_{K}  | f'(z)^p | \,  \bigl | \frac{2n_f}{\rho_{\mathbb{H}}} \bigr |^2 \dzy}{ \int_{K}  | f'(z)^p | \dzy}.
 $$
 In this formalism, we must show:
 \begin{lemma} If $6k|p| < \min(0.49, c/R)$ then
  $$\mathcal Q(A(t)) \le Ce^{-R/2} k^2$$ for $t > 0$ sufficiently small.
 \end{lemma}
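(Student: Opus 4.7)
The plan is to partition $A(t) = C \sqcup \bigsqcup_j (B_j^* \cap A(t))$, where $C = A(t) \setminus \bigsqcup_j B_j^*$, and to bound $\mathcal Q$ on each piece separately. Since both numerator and denominator of $\mathcal Q$ are additive, $\mathcal Q(A(t))$ is a weighted average of the local quotients $\mathcal Q(C)$ and $\mathcal Q(B_j^* \cap A(t))$, so it suffices to show each of these is bounded by $Ce^{-R/2}k^2$. On the complement $C$, every point lies hyperbolically at least $R/2$ from $\mathcal G$, so the half-plane form of Dyn'kin's estimate~(\ref{eq:vbound}) bounds $|2n_f/\rho_{\mathbb{H}}|^2$ pointwise by $Ck^2 e^{-(2-2k-\varepsilon)R/2} + \delta'$. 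The hypothesis $k<0.49$ makes the exponent exceed $R/2$, so the pointwise bound reduces to $Ck^2 e^{-R/2}+\delta'$, and the additive $\delta'$ is absorbed at the end by letting $\delta\to 0$ exactly as in Section~\ref{sec:BIMS}.

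For each $B_j^*$, the strategy mirrors the freezing argument of Lemma~\ref{freezing-lemma}. The hypothesis $6k|p|<c/R$ together with the Bloch-norm bound $\|\log f'\|_{\mathcal B}\le 6k$ lets us replace $|f'(z)^p|$ by the constant $|f'(z_{B_j^*})|^p$ up to an $R$-independent factor (the integrability near the tangent point of $B_j^*$ is granted by the companion hypothesis $6k|p|<0.49$). I would then partition $B_j^* = \bigsqcup_{m=0}^{n}S^m$ into hyperbolic shells $S^0=B_j$ and $S^m=\{z: m-1<d_{\mathbb{H}}(z,B_j)\le m\}$ for $m\ge 1$, with $n\asymp R/2$. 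A direct computation in $\mathbb{H}$ gives $\int_{S^m}\tfrac{dxdy}{y}\asymp r_j e^m$, where $r_j$ is the Euclidean radius of $B_j$, and summing yields $\int_{B_j^*}\tfrac{dxdy}{y}\asymp r_j e^{R/2}$. On $S^m$, Dyn'kin bounds $|2n_f/\rho_{\mathbb{H}}|^2$ by $Ck^2 e^{-(2-2k-\varepsilon)(m-1)}$; the products $e^m\cdot e^{-(2-2k-\varepsilon)(m-1)}$ then form a convergent geometric series in $m$, precisely because $k<0.49$ makes $2-2k-\varepsilon>1$. Hence the numerator is $\lesssim k^2 r_j |f'(z_{B_j^*})|^p$, and dividing by the denominator $\asymp r_j e^{R/2} |f'(z_{B_j^*})|^p$ gives the required $\mathcal Q(B_j^*)\lesssim k^2 e^{-R/2}$.

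The remaining work is mostly bookkeeping concerning the truncation of $B_j^*$ by the edges of $A(t)$. Periodicity forces $d_{\mathbb{H}}(B_j,B_j+1)>R$, which pins down $r_j\le\tfrac{1}{2}e^{-R/2}$ and hence $r_j^*\le 1/2$; consequently $B_j^*\subset\{\im z<1\}$ and there is no truncation at the top. At the bottom, the edge $\{\im z=t\}$ clips each $B_j^*$ near its tangent point, removing a set of $\tfrac{dxdy}{y}$-mass $\asymp\sqrt{r_j^* t}$, which is negligible compared to $r_j^*$ once $t\ll r_j^*$. Since periodicity leaves only finitely many horoballs per fundamental domain, choosing $t$ small enough handles every horoball not entirely erased, while the handful of horoballs with $r_j^*\lesssim t$ contributes a bounded additive error that does not spoil the asymptotic bound $\mathcal Q(A(t))\le Ce^{-R/2}k^2$.
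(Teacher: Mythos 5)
Your proposal is correct and follows essentially the same route as the paper's (very terse) proof: decompose $A(t)$ into $\bigsqcup_j (B_j^*\cap A(t))$ and the complement, control the complement pointwise via Dyn'kin, and on each $B_j^*$ use the shell-by-shell freezing computation (the half-plane analogue of Lemma~\ref{freezing-lemma}), noting that $\mathcal Q(A(t))\le\max_K\mathcal Q(K)$ over the pieces. The only imprecision is your treatment of the clipped horoballs at the bottom edge: horoballs with $r_j^*\lesssim t$ that survive the cut contribute only their top, which is already hyperbolic distance $\approx R/2$ from $\mathcal G$, so the weight $|2n_f/\rho_{\mathbb{H}}|^2$ there is $\lesssim k^2 e^{-R/2}$ pointwise and the quotient bound holds directly rather than via a ``bounded additive error,'' but this does not affect the conclusion.
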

 
 \begin{proof}
The upper half-plane analogue of the freezing lemma (Lemma \ref{freezing-lemma}, with $|dz|^2/y$ replacing $g_\infty dA_{\hyp}$) shows that
$$
\mathcal Q(B^*_j \cap A(t)) \le Ce^{-R/2}k^2,
$$
for any horoball $B^*_j$.
Outside $\mathcal G^* = \bigcup B_j^*$, the  weight $ \bigl | \frac{2n_f}{\rho_{\mathbb{H}}} \bigr |^2$ is small, and so the quotient $\mathcal Q(A(t) \setminus \mathcal G^*)$ is small as well.
Putting these estimates together proves the lemma.
 \end{proof}
 
 With the above lemma, Theorem \ref{sparse-thm} follows from \cite[Lemma 7.1]{qcdim}.
 
 \begin{remark}
Define a ``boat'' of order $\beta$ to be an image of
 $$
 B(1,0,\beta) = \bigl \{ z \in \mathbb{H} \, : \, -1 < x < 1,  \, x^{\beta} < y < 1 \bigr \}
 $$ 
under an affine mapping $z \to az +b$ with $a>0$, $b\in \mathbb{R}$.
 The above argument also works in the case when $\mathcal G$ is a union of boats 
$\bigcup_{j=1}^\infty B(a_j, b_j, \beta_j)$ which are located at least a hyperbolic distance $R$ apart, provided the orders $\{\beta_j\}$ are bounded. The key point is that there exists an 
 $\varepsilon > 0$ sufficiently small so that the integral $\int_{B_j} |dz|^2/y^{1-\varepsilon}$ is bounded by a constant, where $B_j$ ranges over the boats in $\mathcal G$. We leave the details to the interested reader.
 \end{remark}
 
 \appendix
 
 \section{Dyn'kin's estimate}

\label{sec:dynkin}

In \cite[Theorem 1]{dynkin}, Dyn'kin proved a general estimate for the non-linearity of conformal mappings with quasiconformal extensions:
 
 \begin{lemma}
  \label{dynkin-lemma}
  Suppose $f: \mathbb{D} \to \mathbb{C}$ is a conformal mapping which has a quasiconformal extension to the plane with dilatation $\mu$ with $\| \mu \|_\infty \le k$ for some $0<k<1$. Then,
 \begin{equation}
 \label{eq:dynkin}
 \biggl | \frac{n_f}{\rho}(z) \biggr | \le C_k (1-|z|)^{1-k} \biggl [ 1  + \int_{1-|z|}^1 \frac{\omega(z,t)}{t^{2-k}} dt \biggr ], \qquad |z| < 1,
 \end{equation}
 where
 $$
 \omega(z,t) = \biggl ( \frac{1}{\pi t^2} \int_{|\zeta-z| \le t} |\mu(\zeta)|^2 |d\zeta|^2 \biggr )^{1/2}.
 $$
 Here, the constant $C_k$ can be taken to be non-decreasing in $k \in (0,1)$.
 \end{lemma}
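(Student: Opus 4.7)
The plan is to derive a Cauchy-type integral representation for $n_f$ in terms of $\mu$, and then to extract pointwise bounds by combining Hölder regularity of quasiconformal maps with a dyadic decomposition of the resulting integral, organized around the reflected point $z^\ast = 1/\bar z$.

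First I would set up the integral representation. The quasiconformal extension $F$ of $f$ satisfies $\overline{\partial}F = \mu\,\partial F$ in $\mathbb{C}$ with $\mu$ supported in $\{|\zeta|>1\}$. Differentiating this equation and solving the resulting inhomogeneous $\bar\partial$-problem for $\log F_z$ (or, equivalently, applying the Cauchy--Pompeiu formula to the holomorphic function $\log f'$ on $\mathbb{D}$ extended by the corresponding quantity for $F$ outside $\mathbb{D}$), one obtains a formula of the shape
\begin{equation*}
n_f(z) \;=\; \int_{|\zeta|>1} \mathcal{K}(z,\zeta)\,\mu(\zeta)\,dA(\zeta), \qquad z\in\mathbb{D},
\end{equation*}
where the kernel $\mathcal{K}(z,\zeta)$ behaves like $1/(\zeta-z)^2$ near $\zeta = z^\ast$, modified by multiplicative factors coming from $F_z/F_{\bar z}$ on $\supp\mu$. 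Normalizing by $|f'(z)|$ (to produce $n_f/\rho$) absorbs these factors into bounded coefficients.

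Next I would install the Hölder improvement. For a normalized quasiconformal map that is conformal on $\mathbb{D}$, the one-sided refinement of Mori's theorem (due to Astala--Gehring/Becker--Pommerenke style distortion) yields that the kernel $\mathcal{K}$ effectively acquires an extra factor of $\bigl((1-|z|)/|\zeta - z^\ast|\bigr)^{1-k}$ when compared with its holomorphic model. This is the source of the $(1-|z|)^{1-k}$ prefactor in the statement; the monotonicity of $C_k$ in $k$ is then visible from the explicit form of this factor.

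Then I would estimate the integral by decomposing the exterior disk into dyadic crescents $\{\zeta : |\zeta - z^\ast| \asymp t\}$ for $t\in[1-|z|,1]$, together with a remote piece $|\zeta - z^\ast|>1$. On each crescent, the Cauchy--Schwarz inequality applied to $\mu$ against the kernel yields a bound of the form $t^{k-2}\,\omega(z,t)\cdot t$, and summing over the dyadic scales converts to the Riemann integral $\int_{1-|z|}^1 \omega(z,t)\,t^{k-2}\,dt$. The remote piece contributes the harmless constant "$1$" inside the bracket.

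The hard part will be justifying the precise Hölder exponent $1-k$ (rather than the weaker $(1-k)/(1+k)$ coming from the naive Mori estimate), and verifying that the local singularities of $\mathcal{K}$ remain integrable on each dyadic crescent after the one-sided distortion improvement has been applied. Once these two ingredients fit together, the assembly of the dyadic bounds into the claimed integral inequality is routine.
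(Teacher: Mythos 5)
The paper does not actually prove this lemma (it is quoted from Dyn'kin), but the sketch it gives of the nearly-identical Lemma~\ref{dynkin-lemma2} reveals the structure of the argument, and comparing your proposal against it exposes a genuine gap.

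Your starting representation is off. You propose a linear integral formula for $n_f$ itself, obtained by applying Cauchy--Pompeiu to $\log F_z$. But $n_f$ is not linear in $\mu$, and $\overline\partial(\log F_z)$ involves $\partial_z\mu$ and $F_{zz}$, so that route presupposes regularity of $\mu$ the lemma does not grant. What Dyn'kin (and the paper's sketch) actually uses is the Cauchy--Green formula applied to $f$ itself, giving
\begin{equation*}
|f''(z)| \,\le\, \biggl|\,\frac{2}{\pi}\int_{1<|\zeta|<2}\frac{\overline\partial f(\zeta)}{(\zeta-z)^3}\,|d\zeta|^2\biggr| + C,
\end{equation*}
so the kernel is cubic, not quadratic, and the integrand carries $\overline\partial f = \mu\,\partial_\zeta f$ rather than $\mu$ alone. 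One then divides by $|f'(z)|(1-|z|)^{-1}$ to reach $n_f/\rho$.

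The serious gap is your claim that the $\partial_\zeta f$ factor on $\supp\mu$ is "absorbed into bounded coefficients" after normalizing by $|f'(z)|$. It is not bounded --- the ratio $|\partial_\zeta f(\zeta)|/|f'(z)|$ grows as $\zeta$ recedes from $z$, and controlling it is where the entire difficulty sits. After decomposing into $E_j = \supp\mu\cap B(z,2^jr)$ with $r=1-|z|$, the Cauchy--Schwarz inequality must be applied so as to \emph{separate $\mu$ from $\partial_\zeta f$}, yielding on each annulus a bound by
\begin{equation*}
\frac{1}{(2^jr)^3}\Bigl(\int_{E_j}|\mu|^2\,|d\zeta|^2\Bigr)^{1/2}\Bigl(\int_{E_j}|\partial_\zeta f|^2\,|d\zeta|^2\Bigr)^{1/2}.
\end{equation*}
The first factor produces $\omega(z,2^jr)$. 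The second is handled by $|\partial_\zeta f|^2\le J_f/(1-k^2)$, giving $\int_{E_j}|\partial_\zeta f|^2\lesssim |f(B_j)|$, and then a K\"uhnau/Becker--Pommerenke distortion estimate for conformal maps with $k$-quasiconformal extensions to bound $|f(B_j)|$ in terms of $|f'(z)|$, $1-|z|$ and $2^j$. That distortion estimate is where the prefactor $(1-|z|)^{1-k}$ and the weight $t^{k-2}$ actually come from; it is not simply an improved H\"older modulus of $f$ inserted into the kernel as you suggest. Your proposal, as written, applies Cauchy--Schwarz only to $\mu$ against the kernel, never confronts $\partial_\zeta f$, and therefore does not close.
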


If one is  interested in utilizing only the support of $\mu$, Dyn'kin's technique yields a slightly better estimate: 
 
 \begin{lemma} 
   \label{dynkin-lemma2}
Suppose $f: \mathbb{D} \to \mathbb{C}$ is a conformal mapping which has a quasiconformal extension to the plane with dilatation $\mu$ with $\| \mu \|_\infty \le k$ for some $0<k<1$. Then,
 \begin{equation}
 \label{eq:dynkin2}
 \biggl | \frac{n_f}{\rho}(z) \biggr | \le C'_k (1-|z|)^{1-k} \biggl [ 1  + \int_{1-|z|}^1 \frac{\tilde \omega(z,t)}{t^{2-k}} dt \biggr ], \qquad |z| < 1,
 \end{equation}
 where
 $$
\tilde \omega(z,t) = k \cdot \biggl ( \frac{|\supp \mu \cap B(z, t)|}{|B(z, t)|} \biggr )^{\frac{1}{1+k}}.
 $$
 Again, the constant $C'_k$ can be taken to be non-decreasing in $k \in (0,1)$.
 \end{lemma}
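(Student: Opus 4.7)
The plan is to retrace Dyn'kin's derivation of (\ref{eq:dynkin}) and to replace his $L^2$-estimate for $\mu$ by an $L^{1+k}$-estimate at the step where the oscillation $\omega(z,t)$ is extracted. Dyn'kin represents the hyperbolic non-linearity $n_f/\rho$ as an integral of $\mu$ against a Cauchy-type kernel over the exterior of the disk, and $\omega(z,t)$ arises precisely when one applies Cauchy--Schwarz to separate $\mu(\zeta)$ from the rest of the integrand on dyadic annuli $|\zeta-z|\asymp t$. The key pointwise observation driving the improvement is that, for any $p\ge 1$,
\begin{equation*}
\left(\frac{1}{\pi t^2}\int_{|\zeta-z|\le t}|\mu(\zeta)|^p\,|d\zeta|^2\right)^{1/p}\;\le\;k\left(\frac{|\supp\mu\cap B(z,t)|}{|B(z,t)|}\right)^{1/p},
\end{equation*}
since $|\mu|^p\le k^p\chi_{\supp\mu}$ pointwise; the choice $p=1+k$ reproduces exactly the quantity $\tilde\omega(z,t)$.

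Concretely, I would replace Dyn'kin's Cauchy--Schwarz step by H\"older's inequality with conjugate exponents $(1+k,(1+k)/k)$. The factor containing $\mu$ then becomes $\tilde\omega(z,t)$ by the display above. The dual factor, originally an $L^2$-norm of a piece of Dyn'kin's kernel on the annulus $|\zeta-z|\asymp t$, is now an $L^{(1+k)/k}$-norm of the same kernel. Since the kernel is essentially homogeneous in $|\zeta-z|$ once the $f$-dependent pieces are isolated, its $L^q$-norm on an annulus of thickness $\asymp t$ scales by the same power of $t$ as in Dyn'kin's $L^2$-case, up to a multiplicative constant depending only on $k$. Summing the resulting estimates over dyadic scales then reassembles into the integral $\int_{1-|z|}^1 \tilde\omega(z,t)/t^{2-k}\,dt$ appearing in (\ref{eq:dynkin2}).

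The main obstacle is to verify that the $f$-dependent pieces inside Dyn'kin's integrand (in particular, derivatives of the $K$-quasiconformal extension of $f$ with $K=(1+k)/(1-k)$) remain controllable in $L^q$ for $q$ just below the Astala threshold $2K/(K-1)=(1+k)/k$. This is precisely the edge of the integrability range guaranteed by Astala's area-distortion theorem, so the bound survives at the cost of a constant depending on $k$ that degenerates only as $k\to 1$. Tracking this $k$-dependence through the argument yields a constant $C'_k$ that is non-decreasing in $k\in(0,1)$, as required.
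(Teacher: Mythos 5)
Your route differs from the paper's, and it has a genuine gap at exactly the point you flag as ``the main obstacle.'' You replace Dyn'kin's Cauchy--Schwarz step with H\"older at exponents $(1+k,(1+k)/k)$, so that the $\mu$-factor alone produces the $1/(1+k)$ power on $|\supp\mu\cap B(z,t)|/|B(z,t)|$. But this forces the dual factor to control $\partial f$ in $L^{(1+k)/k}$ on the annulus, and with $K=(1+k)/(1-k)$ one has $(1+k)/k=2K/(K-1)$: this is not ``just below'' the Astala integrability threshold, it is \emph{exactly} the critical exponent. Local $L^q$ bounds for $\partial f$ fail at the critical exponent for general $K$-quasiconformal maps --- the radial stretch $f(z)=z|z|^{1/K-1}$ already has $|\partial f|\sim |z|^{1/K-1}$, so $|\partial f|^{2K/(K-1)}$ is not locally integrable at the origin. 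The constant you need is therefore not merely degenerating as $k\to1$; it is infinite for every $k\in(0,1)$. Retreating to a sub-critical exponent $q<(1+k)/k$ restores integrability but then H\"older's conjugate exceeds $1+k$, and the resulting power on $|\supp\mu\cap B|/|B|$ falls strictly below the target $1/(1+k)$.

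The paper avoids this altogether by staying with Cauchy--Schwarz (so only $L^2$ of $\partial f$ is needed, which is safely sub-critical for all $k$) and placing the gain in the \emph{other} factor: one bounds $\int_{E_j}|\partial f|^2$ by a constant times the Jacobian mass $|f(E_j)|$ and invokes Astala's area distortion theorem
\begin{equation*}
|f(E_j)| \le C_k\, |f(B_j)| \left( \frac{|E_j|}{|B_j|} \right)^{\frac{1-k}{1+k}},
\end{equation*}
which contributes an extra factor $(|E_j|/|B_j|)^{\frac{1-k}{2(1+k)}}$. Combined with the $(|E_j|/|B_j|)^{1/2}$ coming from $\int_{E_j}|\mu|^2\le k^2|E_j|$, the total exponent is $\tfrac12+\tfrac12\cdot\tfrac{1-k}{1+k}=\tfrac{1}{1+k}$, matching $\tilde\omega$. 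Both arguments aim for the same exponent, but the paper's bookkeeping keeps every integral strictly inside the safe range. To salvage the H\"older route you would need the endpoint (weak-type/Lorentz) refinement of Astala's $L^q$ theorem in place of the strong $L^q$ bound, together with careful tracking of constants through Dyn'kin's kernel estimate; that is considerably more delicate than what the paper does.
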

 
 The proof of the above lemma is nearly identical to that of Dyn'kin's theorem, so we only give a sketch of the argument and explain where the improvement comes from. Set $r = 1-|z|$,
 $B_j = B(z, 2^j r)$ and $E_j = \supp \mu \cap B(z, 2^j r)$. By the Cauchy-Green formula and the elementary bound $|f(z)| \le C_k$ for $|z| \le 2$, we have
 $$
 |f''(z)| \le \biggl | \frac{2}{\pi} \int_{1 < |\zeta| < 2} \frac{\partial f}{\partial \overline{\zeta}} 
 \frac{|d\zeta|^2}{(\zeta - z)^3}\biggr | + C_2, \qquad z \in \mathbb{D}.
 $$
 Applying the Cauchy-Schwarz inequality, we see that the contribution of the annulus $\{\zeta : 2^j r < |\zeta - z| < 2^{k+1}r\}$ does not exceed
 $$
 \frac{1}{(2^j r)^3} \biggl ( \int_{E_j} |\mu(\zeta)|^2 |d\zeta|^2 \biggr )^{1/2}  \Biggl ( \int_{E_j} \biggl |\frac{\partial f}{\partial \zeta} \biggr |^2 |d\zeta|^2 \Biggr )^{1/2}.
 $$
Dyn'kin's estimates the first term using the characteristic $\omega$; to estimate the second term, he replaces the integrand with the Jacobian of $f$ and uses the coarse bound
 $$
|f(E_j)| \le |f(B_j)|.
 $$
In our setting,  Astala's area distortion theorem \cite[Theorem 13.1.5]{AIM} yields the stronger estimate
  $$
|f(E_j)| \le C_k |f(B_j)| \biggl (  \frac{|E_j|}{|B_j|} \biggr )^{\frac{1-k}{1+k}}.
 $$
 The use of this stronger estimate explains why the exponent in  Lemma \ref{dynkin-lemma2} is $\frac{1}{2} + \frac{1}{2} \cdot \frac{1-k}{1+k} = \frac{1}{1+k}$ compared to the exponent in  Lemma \ref{dynkin-lemma} which is only $\frac{1}{2}$.

In this paper, we utilize the above estimate in a slightly different form.
Let $(\supp \mu)^+ \subset \mathbb{D}$ be the reflection of the support of $\mu$ in the unit circle. 
 In terms of the hyperbolic distance from $z$ to $(\supp \mu)^+$, the estimate says:
 
 \begin{corollary}
Suppose $0 < k < 1$, $d_{\mathbb{D}} \bigl (z, (\mathcal \supp\, \mu)^+ \bigr ) > L$ and $d_\mathbb{D}(z, 0) > L$. Then,
$$
\biggl |\frac{n_f}{\rho}(z) \biggr | \lesssim k \cdot Le^{-(1-k)L} + (1-|z|)^{1-k},
$$ 
where the implicit constant can be taken to be non-decreasing in $k \in (0,1)$.
 \end{corollary}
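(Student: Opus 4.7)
The plan is to deduce the corollary directly from Lemma \ref{dynkin-lemma2} by using the hypotheses to control the integrand on its right-hand side. The constant ``$1$'' in the bracket of (\ref{eq:dynkin2}) contributes exactly $C'_k(1-|z|)^{1-k}$, which accounts for the second summand in the claim; what remains is to show
$$
(1-|z|)^{1-k}\int_{1-|z|}^{1} \frac{\tilde\omega(z,t)}{t^{2-k}}\,dt \,\lesssim\, k\,L\,e^{-(1-k)L}.
$$

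I would pass to the upper half-plane and place $z=iy$, so that $y = 1-|z| \le e^{-L}$ by the second hypothesis. The first hypothesis becomes the geometric statement that $\supp\mu \subset \mathbb{H}^{-}$ avoids the Euclidean disk $\tilde B = B(-iy\cosh L,\,y\sinh L)$, obtained by reflecting the hyperbolic ball $B_\mathbb{H}(iy,L)$ across $\mathbb{R}$. After the affine rescaling $w\mapsto w/y$, which preserves $\tilde\omega$ and converts the integral to $\int_{1}^{1/y}\tilde\omega(i,s)/s^{2-k}\,ds$, the task reduces to bounding the density
$$
\rho(s) \,:=\, \frac{\bigl|(B(i,s)\cap\mathbb{H}^{-})\setminus B(-i\cosh L,\,\sinh L)\bigr|}{\pi s^{2}}, \qquad s\in[1,\,1/y].
$$

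The key geometric observation is that both Euclidean disks are symmetric about the imaginary axis, and a direct comparison of their horizontal half-widths as functions of the height $y'$ shows that they coincide at the critical height $y^{*}_{s} = -s^{2}/(2(1+\cosh L))$, with $B(-i\cosh L,\sinh L)$ being strictly wider for $y'<y^{*}_{s}$. Hence every horizontal slice of $B(i,s)\cap\mathbb{H}^{-}$ lying below $y^{*}_{s}$ is entirely absorbed by the forbidden disk, so the allowed region is confined to the strip $y'\in(y^{*}_{s},\,0)$; its area is at most $2s\cdot|y^{*}_{s}| \lesssim s^{3}e^{-L}$, yielding $\rho(s) \lesssim s\cdot e^{-L}$.

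Inserting $\tilde\omega(i,s)\lesssim k\rho(s)^{1/(1+k)}$ into the rescaled integral, a short computation gives
$$
\int_{1}^{1/y}\frac{\tilde\omega(i,s)}{s^{2-k}}\,ds \,\lesssim\, k\,e^{-L/(1+k)}\int_{1}^{e^{L}}\frac{ds}{s^{\,1-k^{2}/(1+k)}} \,\lesssim\, k\,L\,e^{-(1-k)L},
$$
the factor $L$ arising from the integral of essentially $ds/s$ over $[1,e^{L}]$, while the contribution from $s>e^{L}$ (where $\rho\asymp 1$ and only the trivial bound $\tilde\omega\lesssim k$ is available) is of order $k e^{-(1-k)L}$ and therefore lower order. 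I expect the principal obstacle to be the rigorous slice-by-slice geometric comparison of the two disks' widths that underlies the confinement of the allowed region to the strip above $y^{*}_{s}$; once that containment is secured, the remaining estimates are routine.
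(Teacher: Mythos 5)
Your proposal is correct and follows essentially the same route as the paper: both start from Lemma~\ref{dynkin-lemma2}, use the separation hypothesis to bound the density $|\supp\mu\cap B(z,t)|/|B(z,t)|$ by $\asymp (t/(1-|z|))\,e^{-L}$ over the relevant range of scales, and then integrate. The only differences are cosmetic: you work in $\mathbb{H}$ with a continuous integral in $s$ and spell out the slice-by-slice disk comparison that yields the density bound, whereas the paper works dyadically over shells $j=0,\dots,\lfloor L\rfloor$ and states the corresponding area estimate $|\supp\mu\cap B(z,e^j(1-|z|))|\lesssim (1-|z|)^2 e^{3j-L}$ without derivation, bounding the resulting near-flat geometric series by $L$ times its last term.
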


\begin{proof}
For $j = 0, 1, 2, \dots, \lfloor L \rfloor$, the assumptions on the support of $\mu$ imply that the Euclidean area 
$$
\bigl |  \supp \mu \cap B \bigl (z, e^j(1-|z|) \bigr ) \bigr | \le C(1-|z|)^2e^{3j-L},$$
 and therefore 
$\omega \bigl(z, e^j(1-|z|) \bigr ) \lesssim k \cdot e^{(j-L)/2}$ and 
$\tilde \omega \bigl(z, e^j(1-|z|) \bigr ) \lesssim k \cdot e^{(j-L)/(1+k)}$.
Hence, the right hand side of (\ref{eq:dynkin2}) is bounded above by
 \begin{equation}
 \label{eq:append1}
 C(1-|z|)^{1-k} \biggl (1 + \int_{e^L(1-|z|)}^\infty \frac{k}{t^{2-k}} dt
 + \sum_{j=0}^{\lfloor L \rfloor} \int_{e^j(1-|z|)}^{e^{j+1}(1-|z|)} \frac{k \cdot Ce^{(j-L)/(1+k)}}{t^{2-k}} dt \biggr ),
 \end{equation}
After opening the brackets, the second term in  (\ref{eq:append1}) is
\begin{equation}
\label{eq:second-term}
 = \frac{Ck}{2-k} (1-|z|)^{1-k} \bigl [e^L(1-|z|) \bigr ]^{-(1-k)} 
   \asymp Ck \cdot e^{-L(1-k)},
\end{equation}
while the $j$-th term in the sum is comparable to
 $$
C(1-|z|)^{1-k}   \int_{e^j(1-|z|)}^{e^{j+1}(1-|z|)} \frac{k \cdot e^{(j-L)/(1+k)}}{t^{2-k}} dt \asymp  Ck \cdot e^{-j(1-k)}
  \cdot e^{(j-L)/(1+k)}.
 $$
Since $1/(1+k) - (1-k) > 0$ for any $0 < k < 1$, this is an increasing geometric series in $j$. In particular, each term in the sum is bounded by the last term with $j=L$. 
Putting these estimates together gives the corollary. 
\end{proof}

 \section{Wiggly potentials}
In this appendix, we give another entry in the dictionary between  integral means spectra of conformal maps and perturbations of the Laplacian.
 Call a potential $V(x)$ {\em wiggly} if there exists $R, \alpha > 0$ such that
$\int_B V(x) > \alpha$
over any ball of hyperbolic radius $R$.
\begin{theorem}
For a wiggly potential $V$, there exists a constant $c = c(\alpha, R) > 0$ such that $\beta_{p V} > c |p|^2$ for $0 \le p \le 1$.
\end{theorem}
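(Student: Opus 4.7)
The plan is to derive the quadratic lower bound $\beta_{pV} > c p^2$ from the easier linear-in-$p$ bound $\beta_{pV} \ge a p$, exploiting the elementary fact that $p \ge p^2$ on $[0, 1]$. By Theorem~\ref{beta-growth} (interpreted with the logarithm that should appear on the right of (\ref{eq:beta-growth})) together with Jensen's inequality applied to $x \mapsto e^{px}$,
\begin{equation*}
\beta_{pV} \, = \, \limsup_{t \to \infty} \frac{1}{t} \log \mathbb{E}_0 \exp\!\Bigl( p \int_0^t V(B_s) \, ds \Bigr) \, \ge \, p \cdot \liminf_{t \to \infty} \frac{1}{t} \, \mathbb{E}_0 \int_0^t V(B_s) \, ds.
\end{equation*}
The problem therefore reduces to showing that the expected accumulated potential grows linearly in $t$, with slope bounded below by some $c_0 = c_0(\alpha, R) > 0$; the constant $c$ in the theorem can then be taken to be $c_0$ (or $c_0/2$ if strict inequality at $p=1$ is desired).

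The linear rate I would extract from a uniform \emph{one-step estimate}: there exist $T = T(R)$ and $c_0 > 0$ such that $\mathbb{E}_z \int_0^T V(B_s) \, ds \ge c_0$ for every starting point $z \in \mathbb{D}$. Given such an estimate, the Markov property at times $T, 2T, \ldots, nT$ gives $\mathbb{E}_0 \int_0^{nT} V(B_s) \, ds \ge n c_0$ by a trivial induction, and hence the required linear growth with slope $c_0/T$. The one-step estimate itself comes from the representation
\begin{equation*}
\mathbb{E}_z \int_0^T V(B_s) \, ds \, = \, \int_\mathbb{D} V(x) \, g_T(z, x) \, dA_{\hyp}(x) \, \ge \, \Bigl( \inf_{d_{\mathbb{D}}(z, x) \le R} g_T(z, x) \Bigr) \cdot \int_{d_{\mathbb{D}}(z, x) \le R} V(x) \, dA_{\hyp}(x).
\end{equation*}
The second factor is $\ge \alpha$ by the wiggliness hypothesis. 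For the infimum, conformal invariance makes $g_T(z, x)$ a radially decreasing function of $d_\mathbb{D}(z, x)$ (cf.\ Lemma~\ref{t-replacement}(i)), so its minimum over the hyperbolic ball of radius $R$ around $z$ is attained at distance exactly $R$, and for $T$ large enough in terms of $R$ it is at least $\tfrac12 g_\infty$ at distance $R$, which is $\gtrsim e^{-R}$. Uniformity in $z$ is automatic from conformal invariance.

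I expect the main subtle point to be the radial monotonicity of $g_T(z, \cdot)$ at \emph{finite} $T$: at $T = \infty$ it follows from the explicit formula $g_\infty(z, x) = (1/\pi) \log |(1 - \bar z x)/(z - x)|$, but at finite $T$ one has to invoke either the classical radial monotonicity of the hyperbolic heat kernel $p_s(z, \cdot)$ or a sub-Markov comparison in the spirit of Lemma~\ref{t-replacement}(i). Conceptually, the linear-to-quadratic reduction reflects nothing more than the convexity of $\beta_{pV}$ around $p = 0$: any strictly positive linear bound near the origin beats $c p^2$ on any bounded interval. Unlike in the sparse setting, no fine cancellations are needed — the wiggly case is genuinely much softer than the sparse one.
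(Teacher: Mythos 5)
Your argument is correct, and since the paper explicitly leaves this theorem ``as an exercise for the reader,'' there is no official proof to compare against; the structure you propose is the natural one. The two moving parts --- the Jensen reduction $\log \mathbb{E}_0 e^{pX_t} \ge p\,\mathbb{E}_0 X_t$ with $X_t = \int_0^t V(B_s)\,ds$, and the one-step first-moment bound $\mathbb{E}_z \int_0^T V(B_s)\,ds \ge c_0(\alpha,R)$ propagated by the Markov property --- are both sound. The Green's function lower bound is supplied correctly: by conformal invariance $g_T(z,x)$ depends only on $d_{\mathbb{D}}(z,x)$; radial monotonicity of $g_T$ follows from Lemma~\ref{t-replacement}(i) since $g_t = (g_t/g_\infty)\cdot g_\infty$ is a product of two positive decreasing radial functions (or, as you note, directly from radial monotonicity of the heat kernel); and $g_\infty$ at hyperbolic distance $R$ from the pole is $\asymp e^{-R}$, so choosing $T=T(R)$ large enough that $g_T \ge \tfrac12 g_\infty$ there gives a uniform $c_0 \gtrsim \alpha e^{-R}$. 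You also correctly flag the missing $\log$ in equation~(\ref{eq:beta-growth}).

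Two small cosmetic points. First, at $p=0$ the stated strict inequality $\beta_{pV} > c|p|^2$ degenerates to $0>0$; the theorem should be read with $\ge$, or with $0<p\le1$, and your linear bound $\beta_{pV}\ge (c_0/T)\,p$ handles either reading (take $c = c_0/(2T)$ for strictness on $(0,1]$). Second, your closing remark that the linear-to-quadratic reduction ``reflects the convexity of $\beta_{pV}$ around $p=0$'' is a red herring: convexity of $p\mapsto\beta_{pV}$ plays no role in the proof --- what you actually use is convexity of $x\mapsto e^{px}$ (Jensen) plus the elementary $p\ge p^2$ on $[0,1]$. The convexity of $\beta$ would only tell you that a linear lower bound persists, not that it is quadratic. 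Better to phrase the conceptual point as: wiggliness forces a positive linear-in-$t$ accumulation of potential along Brownian paths, and Jensen turns a linear first moment into a linear Lyapunov exponent, which trivially dominates $cp^2$ on the unit interval.
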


This is reminiscent of a variant of a theorem of P.~Jones which we state for the integral means spectrum and use non-linearity instead of the Schwarzian derivative:

\begin{theorem}
\label{jones-thm}
Suppose $f: \mathbb{D} \to \mathbb{C}$ is a conformal mapping onto a domain with quasicircle boundary, such that  any
point $z \in \mathbb{D}$ is located within hyperbolic distance $R$ of a point $w \in \mathbb{D}$ with 
$$
\biggl |\frac{n_f}{\rho}(w_i) \biggr | > \alpha.
$$ Then, there exists a constant $c(\alpha, R) > 0$ so that
$$
\beta_f(p) > c(\alpha, R) |p|^2, \qquad |p| \le 1.
$$
\end{theorem}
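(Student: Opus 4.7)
The plan is to \emph{invert} the Becker--Pommerenke calculation of Section~\ref{sec:bp-argument}: rather than bounding $u''(t)$ from above as in the proof of Theorem~\ref{sparse-thm}, we use the wiggliness hypothesis to bound it from below. After a standard reduction to the periodic upper half-plane setting (or equivalently, running the analogous computation in the disk), set
$$u(t) \;=\; \int_{A(t)} |f'(z)^p| \dzy, \qquad A(t) = [0,1] \times [t,2],$$
so that identity (\ref{eq:improved-smirnov}) reads
$$u''(t) \;=\; \frac{|p|^2}{4 t^2} \int_{A(t)} |f'(z)^p| \left|\frac{2 n_f}{\rho_{\mathbb H}}\right|^2 \dzy \;+\; \mathcal O_p(1).$$

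The crucial step is to establish the \emph{reverse} of the estimate used in Section~\ref{sec:bp-argument}, namely
\begin{equation}
\label{eq:plan-key-jones}
\int_{A(t)} |f'(z)^p| \left|\frac{2 n_f}{\rho_{\mathbb H}}\right|^2 \dzy \;\geq\; c(\alpha, R)\, u(t), \qquad t \in (0, t_0].
\end{equation}
Two facts drive this bound. First, because $f$ has a quasiconformal extension (since its image is a quasidisk), $\log f'$ is Bloch with bounded norm, so for $|p| \leq 1$ the weight $|f'(z)^p|$ oscillates by at most a factor of $C(R) = \mathcal O(1)$ on any hyperbolic ball of radius $R$. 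Second, the quantity $g(z) := y \, n_f(z) = n_f/\rho_{\mathbb H}$ (up to a harmless factor) is \emph{hyperbolically Lipschitz}: its hyperbolic gradient is controlled by $\|n_f/\rho_{\mathbb H}\|_\infty$ and $\|y^2 n_f'\|_\infty$, both finite by the standard pre-Schwarzian and Schwarzian estimates for $f$. Consequently $|n_f/\rho_{\mathbb H}|(w) > \alpha$ propagates to $|n_f/\rho_{\mathbb H}| > \alpha/2$ throughout a hyperbolic ball $B_{r(\alpha)}(w)$, and by the wiggliness hypothesis every point of $A(t)$ lies within hyperbolic distance $R + r(\alpha)$ of such a thickened wiggly point. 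A Whitney-type decomposition of $A(t)$ into boxes of bounded hyperbolic diameter then yields (\ref{eq:plan-key-jones}).

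Substituting (\ref{eq:plan-key-jones}) into (\ref{eq:improved-smirnov}) and absorbing the $\mathcal O_p(1)$ term into the main one (legitimate once $u(t)$ is large, via the easy a priori bound $u(t) \gtrsim \log(1/t)$ coming from the periodic normalization $\int_0^1 f' \, dx = 1$), we arrive at the Hardy-type inequality
$$u''(t) \;\geq\; \frac{\lambda}{t^2}\, u(t), \qquad \lambda := c(\alpha, R)|p|^2, \quad t \in (0, t_1].$$
The substitution $v(s) := u(e^{-s})$ converts this to $v''(s) + v'(s) \geq \lambda v(s)$, whose characteristic polynomial has positive root $r_+ = (-1 + \sqrt{1 + 4\lambda})/2 \approx \lambda$ for small $\lambda$. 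A standard Sturm comparison then yields $v(s) \gtrsim e^{r_+ s}$, equivalently $u(t) \gtrsim t^{-r_+}$ as $t \to 0^+$, forcing $\beta_f(p) \geq r_+ \geq c'(\alpha, R)|p|^2$ for $|p| \leq 1$.

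The hard part will be establishing (\ref{eq:plan-key-jones}) with a constant depending only on $\alpha$, $R$, and the quasisymmetric constants of $\partial f(\mathbb{D})$; this hinges on the quantitative hyperbolic Lipschitz property of $n_f/\rho_{\mathbb H}$ and a careful Whitney packing argument. Everything else --- the reduction to the periodic setting, absorbing the error term, and translating between the asymptotics of $u(t)$ and those of $\beta_f(p)$ --- is routine.
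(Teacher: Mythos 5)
The paper does not actually supply a proof of Theorem~\ref{jones-thm}; it declares it an exercise and notes that it is a special case of \cite[Theorem 1.6]{qcdim}, which uses the very Becker--Pommerenke framework of Section~\ref{sec:bp-argument}. Your plan---invert the inequality used to prove Theorem~\ref{sparse-thm}, i.e.\ bound $u''(t)$ from below instead of above---is exactly the natural way to carry this out, and the key pieces are in place: identity (\ref{eq:improved-smirnov}), the Bloch-norm control of $|f'(z)^p|$ on hyperbolic balls of bounded radius, the hyperbolic Lipschitz property of $n_f/\rho_{\mathbb H}$ (which indeed follows from $\|y\,n_f\|_\infty\lesssim 1$ together with the Kraus--Nehari bound $|y^2 S_f|\lesssim 1$, so that $|y^2 n_f'|=|y^2 S_f+\tfrac12(y n_f)^2|$ is bounded and the wiggliness propagates to a full hyperbolic ball of radius $r(\alpha)$), and a bounded-overlap packing of $A(t)$ by hyperbolic Whitney boxes. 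The comparison ODE with $v''+v'\geq\lambda v$ and positive root $r_+\approx\lambda$ is the right endgame.

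Two points deserve more care than your sketch allows. First, the a~priori bound $u(t)\gtrsim\log(1/t)$ does \emph{not} follow directly from $\int_0^1 f'\,dx=1$ for $|p|\neq 1$; that argument only bounds $\int_0^1|f'|\,dx$ from below. The correct route is to note that $\log|f'(z)^p|=\re(p\log f')$ is harmonic and periodic, hence $\int_0^1\log|f'(x+iy)^p|\,dx$ is affine in $y$; by the normalization $f'\to 1$ as $y\to\infty$ this affine function is identically zero, and Jensen's inequality then gives $\int_0^1|f'(x+iy)^p|\,dx\geq 1$. Second, the Whitney argument has a boundary effect: the wiggly ball $B_{r(\alpha)}(w_Q)$ attached to a box $Q$ near the bottom edge $y=t$ may poke below $A(t)$, so what one actually obtains is $u''(t)\gtrsim \lambda t^{-2}\,u(te^{R'})-C$ with $R'\asymp R+r(\alpha)$, i.e.\ a \emph{delay} differential inequality in the variable $s=\log(1/t)$. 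This is harmless once one observes that for small $\lambda$ the characteristic root of $r^2+r=\lambda e^{-rR'}$ is still $\asymp\lambda$, but it should be acknowledged rather than absorbed silently. Finally, when reducing to the periodic half-plane model, keep in mind that $n_f/\rho$ (unlike the Schwarzian) is not conformally invariant; conjugating by the Cayley transform shifts it by a bounded quantity, so the wiggliness constant $\alpha$ must be tracked through the reduction (or one works directly in $\mathbb D$).
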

The proofs of these theorems are quite simple and we leave them as exercises for the reader. Theorem \ref{jones-thm} can be viewed as a special case of 
\cite[Theorem 1.6]{qcdim}.

\bibliographystyle{amsplain}

\end{document}